\newcolumntype{C}{>{$}c<{$}}
\newcolumntype{L}{>{$}l<{$}}
\definecolor{myorange}{RGB}{235,140,30}
\definecolor{mygreen}{RGB}{28,172,0} 
\definecolor{mylilas}{RGB}{170,55,241}
\definecolor{azulescuro}{RGB}{0,0,102}
\theoremstyle{plain}
\newtheorem{thm}{Theorem}[section]
\newtheorem{thmx}{Theorem}
\newtheorem{corx}[thmx]{Corollary}
\newtheorem{lemma}[thm]{Lemma}
\newtheorem{proposition}[thm]{Proposition}
\newtheorem{corollary}[thm]{Corollary}
\theoremstyle{definition}
\newtheorem{conjecture}[thm]{Conjecture}
\newtheorem{remark}{Remark}[section]
\numberwithin{equation}{section}
\newcommand{\bo}{{\rm O}}
\newcommand{\ds}{\displaystyle}
\newcommand{\ch}{\mathcal{C}_{h}}
\newcommand{\dint}{\ds\int}
\newcommand{\dsum}{\ds\sum}
\newcommand{\eqskip}{ \vspace*{2mm}\\ }
\newcommand{\R}{\mathbb{R}}
\newcommand{\N}{\mathbb{N}}
\newcommand{\Z}{\mathbb{Z}}
\newcommand{\eln}{\ell_{n}}
\newcommand{\elm}{\ell_{m}}
\newcommand{\SI}{\mathbb{S}^{1}\times I_{h}}
\newcommand{\fr}[2]{\frac{\ds #1}{\ds #2}}
\newcommand{\liyauA}{\sqrt{\pi^2+1}-\pi}
\newcommand{\liyauB}{\left(\pi+\frac{1}{\sqrt{\pi}}\right)\left(\fr{2}{3}\right)^{3/2}}
\patchcmd{\section}{\scshape}{\bfseries}{}{}
\renewcommand{\@secnumfont}{\bfseries}
\begin{document}

\title{P\'{o}lya's conjecture on $\mathbb{S}^1 \times \R$}

\author[P. Freitas]{Pedro Freitas}
\author[R. Z. Wang]{Rui Zhu Wang}

\address{Grupo de F\'{\i}sica Matem\'{a}tica, Instituto Superior T\'{e}cnico, Universidade de Lisboa, Av. Rovisco Pais, 1049-001 Lisboa, Portugal}
\email{pedrodefreitas@tecnico.ulisboa.pt}
\address{Departamento de Matem\'{a}tica, Instituto Superior T\'{e}cnico, Universidade de Lisboa, Av. Rovisco Pais, 1049-001 Lisboa, Portugal}
\email{ruiwang.pt@gmail.com}

\date{\today}

\begin{abstract}
    We study the area ranges where the two possible isoperimetric domains on the infinite cylinder $\mathbb{S}^{1}\times \R$, namely, geodesic disks and cylindrical strips of the form $\mathbb{S}^1\times [0,h]$, satisfy P\'{o}lya's conjecture. In the former case, we provide an upper bound on the maximum value of the radius for which the
	conjecture may hold, while in the latter we fully characterise the values of $h$ for which it does hold for these strips. As a consequence,
	we determine a necessary and sufficient condition for the isoperimetric domain on $\mathbb{S}^{1}\times \R$ corresponding to a given area to satisfy P\'{o}lya's conjecture. In the case of the cylindrical strip, we
 	also provide a necessary and sufficient condition for the Li-Yau inequalities to hold.
\end{abstract}
\keywords{Laplace--Beltrami operator; eigenvalues}
 \subjclass[2020]{\text{Primary: 58J50, 35P15; Secondary: 58C40}}
\maketitle

\section{Introduction}
P\'{o}lya's conjecture for the Dirichlet Laplacian on a domain states that all its eigenvalues are greater than or equal to the first term in the corresponding Weyl asymptotics. It was formulated by P\'{o}lya in the first edition of~\cite{poly1} in 1954, who then proved it in 1961 for the case of domains that tile the plane~\cite{poly2}.
Apart from the direct extension of this result to higher-dimensional Euclidean domains by Urakawa~\cite{urak83}, the conjecture is only known to hold for certain Cartesian products~\cite{lapt,fs,hw}, for
Euclidean balls and sectors~\cite{flps}, and for certain families of domains tiling a given domain~\cite{fs,frlapa}. The result in~\cite{fs} essentially states that if a domain $\Omega$ satisfies certain geometric conditions and may
be tiled by an arbitrary number of $p$ equal isometric copies of a domain $\Omega_{p}$, then there exists $p_{0}$
large enough such that $\Omega_{p}$ satisfies P\'{o}lya's conjecture for all $p$ greater than or equal to $p_{0}$.
This result is essentially qualitative, but, on the other hand, it is not restricted to Euclidean domains, thus
showing that the conjecture may also be satisfied for certain families of domains on
manifolds. To the best of our knowledge and until then, for the Dirichlet problem on non-Euclidean domains this
had only been known to hold for the two-dimensional hemisphere~\cite{bb,blps,fms}.

One of the purposes of the present paper is to contribute to the understanding of what differences may be expected
when considering P\'{o}lya's conjecture on manifolds. To illustrate this behaviour, one of the examples given in~\cite{fs}
is that of a cylindrical surface of the form $\ch=\mathbb{S}^{1}\times I_{h}$, where $I_{h}$ is an open interval of length $h$,
and for which it is concluded that P\'{o}lya's conjecture is satisfied provided $h$ is sufficiently small
\cite[Example III.D]{fs}. On the other hand, a simple argument based on the behaviour of the first eigenvalue was
presented there showing that for large enough $h$ this will no longer be the case -- see below and also
Section~\ref{sec-upp}. This shows that topological issues may play a role, with the consequence that
now tiling domains no longer necessarilly satisfy the conjecture, not even in a weaker form -- see
Proposition~\ref{noweaker}. As far as we can tell, the only instance where tiling domains had been
considered previously on cylindrical surfaces were first in a paper by Hersch, who also considered
domains on the torus~\cite{hers}, and also in a paper by the first author~\cite{frei}. In either case,
only a finite number of tiles were considered.

Apart from illustrating the different possible behaviours mentioned above, another purpose of the present article
is to provide an example of a quantitative version of the result in~\cite{fs} by characterising precisely the values
of $h$ for which P\'{o}lya's conjecture is satisfied for $\ch$. In this case the conjecture reads as
\[
 \lambda_{k}\left( \ch  \right) \geq \fr{4k\pi}{\left| \ch  \right| } = \fr{2k}{h},
\]
where $\lambda_{k}\left( \ch  \right)$ and $\left| \ch  \right|$ denote the $k^{\rm th}$ Dirichlet eigenvalue
and the area of $\ch $, respectively. Since the first eigenvalue is given by $\lambda_{1}\left(\ch \right)=\pi^2/h^2$, it is straightforward to see that this eigenvalue cannot satisfy the conjecture for $h$ larger than
$\pi^2/2$. However, it is not clear whether the conjecture holds for {\it all} $h$ smaller than or equal to this value and, in
fact, we will show that there are indeed two disjoint open intervals for smaller values of $h$ where the conjecture
fails. More precisely, our main result is the following.
\begin{thmx}\label{mainthm}
    Pólya's conjecture on $C_{h}=\SI$ is satisfied if and only if $h$ belongs to
    \[ \left(0,8-\sqrt{64-4\pi^2}\right] \cup \left[2+\sqrt{4-\frac{\pi^2}{4}},13-\sqrt{169-9\pi^2}\right]
    \cup \left[\frac{13}{4}+\sqrt{\frac{169}{16}-\pi^2},\frac{\pi^2}{2}\right].
    \]
    On the intervals
    \[
     \left(8-\sqrt{64-4\pi^2},2+\sqrt{4-\frac{\pi^2}{4}}\right) \approx (3.0481, 3.2380)
    \]
and
    \[
     \left(13-\sqrt{169-9\pi^2},\frac{13}{4}+\sqrt{\frac{169}{16}-\pi^2}\right) \approx (4.0460, 4.0824)
    \]
    the conjecture fails for $\lambda_{8}\left( \ch  \right)$ and $\lambda_{13}\left( \ch  \right)$,
    respectively, and only for these eigenvalues in each case. From $\pi^2/2$ onwards the conjecture fails for
    $\lambda_{1}\left( \ch  \right)$ and then for some higher eigenvalues as $h$ is increased further.
\end{thmx}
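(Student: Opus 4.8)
The plan is to work directly with the explicit spectrum of the Dirichlet Laplacian on the cylindrical strip $\ch = \SI$. Separating variables, the eigenvalues are
\[
 \spec{n}{m} = n^2 + \frac{m^2\pi^2}{h^2}, \qquad n \in \Z,\ m \in \N,
\]
each value with $n \neq 0$ occurring with multiplicity two. The first step is therefore to set up the counting function $N(\Lambda) = \#\{(n,m) : \spec{n}{m} \le \Lambda\}$ and to recognise that verifying Pólya's conjecture $\lambda_k(\ch) \ge 2k/h$ for all $k$ is equivalent to the single inequality
\[
 N(\Lambda) \le \frac{h}{2}\,\Lambda \qquad \text{for all } \Lambda > 0,
\]
which is the standard duality between eigenvalue bounds and counting-function bounds (if $\lambda_k \ge 2k/h$ fails for some $k$, then taking $\Lambda$ slightly above that $\lambda_k$ gives $N(\Lambda) \ge k > h\Lambda/2$, and conversely). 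Because the eigenvalues come in the shifted-lattice family $\{m^2\pi^2/h^2\} + \{n^2\}$, one then reduces the two-dimensional count to a one-dimensional sum: for each fixed $m$, the number of $n\in\Z$ with $n^2 \le \Lambda - m^2\pi^2/h^2$ is $2\lfloor\sqrt{\Lambda - m^2\pi^2/h^2}\rfloor + 1$, so that
\[
 N(\Lambda) = \sum_{m\ge 1,\ m^2\pi^2/h^2 \le \Lambda} \left(2\left\lfloor\sqrt{\Lambda - \tfrac{m^2\pi^2}{h^2}}\,\right\rfloor + 1\right).
\]

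**Next I would** analyse where the excess $E(\Lambda) := N(\Lambda) - h\Lambda/2$ can become positive. The floor and the finiteness of the $m$-range mean $E$ is piecewise linear with downward slope $-h/2$ between jumps; jumps of size $+1$ occur when some $\sqrt{\Lambda - m^2\pi^2/h^2}$ crosses an integer, and a jump of size $+(2j+1)$ occurs when $\Lambda$ crosses $m^2\pi^2/h^2$ itself (a new row $m$ enters carrying $2j+1$ lattice points, where $j = \lfloor\sqrt{\Lambda - m^2\pi^2/h^2}\rfloor$ evaluated just above). Hence the maxima of $E$ are attained immediately after such jumps, i.e. at points of the discrete set of eigenvalues, and Pólya's conjecture for $\ch$ is equivalent to $\spec{n}{m} \ge 2\,N(\spec{n}{m}^-)/h$ — equivalently $\lambda_k \ge 2k/h$ — checked only at eigenvalues. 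The key reduction is that for small $h$ the rows $m\ge 2$ are pushed out to large $\Lambda$ and contribute little, so the dangerous values are low-lying eigenvalues with $m=1$; as $h$ grows past $\pi^2/2$ the $m=1$, $n=0$ eigenvalue $\pi^2/h^2$ itself fails first, which handles the tail of the statement. The finitely many potentially-violating indices — the theorem names $k=1,8,13$ — are located by tracking, as $h$ increases through $(0,\pi^2/2]$, the competition between the ordered eigenvalue $\lambda_k(h)$ (a piecewise-smooth, $h$-dependent reordering of the $\spec{n}{m}$) and the line $2k/h$.

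**The main computational step** is then to pin down the three critical intervals exactly. For a candidate index $k$ one writes the eigenvalue $\lambda_k(\ch)$ on the relevant $h$-range as $\lambda_k = a^2 + b^2\pi^2/h^2$ for the appropriate integers $a,b$ (with $b=1$ for the first two bad intervals, since $\lambda_8$ and $\lambda_{13}$ are of the form $n^2 + \pi^2/h^2$ there), and solves the quadratic inequality $a^2 + \pi^2/h^2 < 2k/h$ in $h$. This is a genuine quadratic in $1/h$: $a^2 h^2 - 2k h + \pi^2 < 0$, with roots $h_\pm = \big(k \pm \sqrt{k^2 - a^2\pi^2}\big)/a^2$, which is exactly the source of the expressions $8 \pm \sqrt{64 - 4\pi^2}$ (here $k=8$, $a=2$, giving roots $(8\pm\sqrt{64-4\pi^2})/4$... — one must match $a$ and the normalisation carefully so that the stated endpoints $8-\sqrt{64-4\pi^2}$, $2+\sqrt{4-\pi^2/4}$, $13-\sqrt{169-9\pi^2}$, $(13/4)+\sqrt{169/16-\pi^2}$ emerge), and similarly $13 \pm \sqrt{169 - 9\pi^2}$ for $k=13$. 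One must also verify two things on each such interval: first, that the eigenvalue which fails is really the $k$-th one, i.e. count exactly $k-1$ eigenvalues (with multiplicity) strictly below it for every $h$ in the interval — this is where the explicit lattice sum above is used — and second, that \emph{no other} eigenvalue fails there, which follows because $E(\Lambda)$ can only be positive in a small neighbourhood just above each bad eigenvalue and the slope $-h/2$ brings it back below zero before the next candidate. Assembling these pieces — the small-$h$ region where the $m\ge 2$ rows are irrelevant and the conjecture holds, the two isolated quadratic windows where $\lambda_8$ and $\lambda_{13}$ dip below $2k/h$, the closed complement intervals where it holds, and the regime $h > \pi^2/2$ where $\lambda_1 = \pi^2/h^2 < 2/h$ — yields the stated characterisation.

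**The hard part** is the bookkeeping: proving that the only violations in $(0,\pi^2/2]$ come from $k=8$ and $k=13$, which requires a uniform bound showing that for every $h$ in the two closed "good" intervals $[2+\sqrt{4-\pi^2/4},\,13-\sqrt{169-9\pi^2}]$ and $[13/4+\sqrt{169/16-\pi^2},\,\pi^2/2]$ (and on $(0,\,8-\sqrt{64-4\pi^2}]$) the inequality $N(\Lambda)\le h\Lambda/2$ holds for \emph{all} $\Lambda$. I expect this to need a careful estimate of the lattice sum $N(\Lambda)$ against its continuous counterpart — essentially a Gauss-circle-type argument adapted to the shifted lattice — combined with the observation that the discrepancy is largest precisely at the low eigenvalues already enumerated, so that once those finitely many checks pass, monotonicity in $h$ of each $\lambda_k(h)\cdot h$ closes the remaining ranges. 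The endpoints being closed (conjecture holds \emph{at} the boundary) will follow from the non-strict inequality $\lambda_k \ge 2k/h$ holding with equality exactly at the quadratic roots.
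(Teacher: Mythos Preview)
Your overall strategy matches the paper's: reformulate P\'{o}lya as $N(\Lambda)\le h\Lambda/2$, exploit the explicit lattice of eigenvalues, dispose of $h>\pi^2/2$ via $\lambda_1$, and extract the exact bad-interval endpoints from quadratics. The gap is precisely in the part you flag as hard. Your proposed closing device, ``monotonicity in $h$ of each $\lambda_k(h)\cdot h$'', is false: for a fixed label $(a,b)$ one has $(a^2+b^2\pi^2/h^2)\,h=a^2h+b^2\pi^2/h$, which is convex with minimum at $h=b\pi/a$, not monotone, and after reordering to form $\lambda_k(h)$ this becomes piecewise-convex with no usable monotonicity. The paper closes the gap differently: it proves an explicit remainder bound by a geometric lattice-point argument (cover the half-ellipse by unit squares, bound the uncovered area from below by inscribed triangles, with a refinement splitting at the point of slope $-1$), yielding $N(\Lambda)\le h\Lambda/2$ for all $h\in\big((1-\tfrac{2\sqrt6}{9})\pi,\,\pi^2/2\big]$ once $\Lambda\ge400$. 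This reduces the problem to the first $1019$ eigenvalues, and that finite range is then handled by a rigorous computer search over ``$k$-intervals'' $I^k_{m,n}=\{h:\lambda_{m,n}h/2<k\}$, not by hand.

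There is also an error in your endpoint analysis. You assert $b=1$ throughout for $\lambda_8$ and $\lambda_{13}$, but each changes expression inside its bad interval: $\lambda_8=1+4\pi^2/h^2$ on the left part and $4+\pi^2/h^2$ on the right (crossover at $h=\pi$), while $\lambda_{13}=1+9\pi^2/h^2$ then $4+4\pi^2/h^2$ (crossover at $h=\pi\sqrt{5/3}$). Accordingly the left endpoint $8-\sqrt{64-4\pi^2}$ comes from $h^2-16h+4\pi^2=0$ (so $(a,b)=(1,2)$), whereas the right endpoint $2+\sqrt{4-\pi^2/4}$ comes from $4h^2-16h+\pi^2=0$ (so $(a,b)=(2,1)$); for $k=13$ the two endpoints arise from $(a,b)=(1,3)$ and $(2,2)$ respectively. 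This is why your single-$(a,b)$ quadratic did not reproduce both endpoints.
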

As a direct corollary of this result, we have the precise values of the area for which the isoperimetric domains on
the infinite cylinder $\mathbb{S}^{1}\times \R$ satisfy P\'{o}lya's inequality. For two-dimensional manifolds,
given a value for the area an isoperimetric domain is a domain with that area with the smallest possible perimeter.
In the case of the infinite cylinder, isoperimetric domains are either a geodesic disk for areas smaller than or
equal to $4\pi$, or, for larger areas, a cylindrical strip $\ch$ bounded by two parallel circles as described
above~\cite{hhm}.
\begin{corx}\label{cor-isop}
 For a given value of the area, the corresponding isoperimetric domains on $\mathbb{S}^{1}\times\R$ satisfy P\'{o}lya's conjecture if and only if this area is in the range
 \[
 \begin{array}{ll}
  \left(0,2\pi\left( 8-\sqrt{64-4\pi^2} \right)\right] \cup \left[2\pi\left(2+\sqrt{4-\fr{\pi^2}{4}}\right),
  2\pi\left( 13-\sqrt{169-9\pi^2}\right)\right] & \eqskip
   \hspace*{6cm} \cup \left[2\pi\left(\fr{13}{4}+\sqrt{\fr{169}{16}-\pi^2}\right),\pi^3\right]
 \end{array}
 \]
\end{corx}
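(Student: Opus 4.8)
The plan is to read the corollary off from Theorem A together with the classification of isoperimetric domains on $\mathbb{S}^1\times\mathbb{R}$ recalled from \cite{hhm} and the known validity of Pólya's conjecture for Euclidean disks \cite{flps}. First I would fix the normalisation: the identity $\lambda_1(\ch)=\pi^2/h^2$ forces $|\mathbb{S}^1|=2\pi$, so that $|\ch|=2\pi h$ and a cylindrical strip of area $A$ is $\ch$ with $h=A/(2\pi)$; by \cite{hhm} the isoperimetric transition between a geodesic disk and a strip occurs precisely at $A=4\pi$, i.e. at $h=2$.

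Next I would treat the two regimes separately. For $A\le 4\pi$ the isoperimetric domain is a geodesic disk $B_r$ with $\pi r^2=A$, hence $r=\sqrt{A/\pi}\le 2$. Since $\mathbb{S}^1\times\mathbb{R}$ is flat with injectivity radius $\pi$, and $r\le 2<\pi$, the exponential map restricts to an isometry from the flat Euclidean disk of radius $r$ onto $B_r$; consequently the Dirichlet spectrum of $B_r$ coincides with that of a Euclidean disk of the same area, and \cite{flps} yields Pólya's conjecture on the whole range $A\le 4\pi$. One then only has to observe that $(0,4\pi]$ is contained in the first interval $\bigl(0,\,2\pi(8-\sqrt{64-4\pi^2})\,\bigr]$ appearing in the statement. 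For $A>4\pi$ the isoperimetric domain is the strip $\ch$ with $h=A/(2\pi)>2$, and Theorem A applies verbatim (the closed strip $\mathbb{S}^1\times[0,h]$ of \cite{hhm} and the open strip $\mathbb{S}^1\times I_h$ of Theorem A have the same Dirichlet eigenvalues). Rescaling the three intervals of Theorem A by the factor $2\pi$ and intersecting with $\{A>4\pi\}$ — which discards nothing new, since $8-\sqrt{64-4\pi^2}>2$ and the leftmost interval is of the form $(0,\,\cdot\,]$ — produces exactly the stated area ranges lying above $4\pi$. Taking the union of the two regimes gives the corollary.

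The step requiring the most care — although it remains entirely elementary — is the interface at $A=4\pi$: one must check that the radius $r=2$ at which the isoperimetric domain changes type lies strictly below the injectivity radius $\pi$, so that the whole disk regime genuinely reduces to \cite{flps}, and that $h=2$ lies in the interior of the first interval of Theorem A, so that no gap opens at the transition. Both facts are immediate from the explicit numbers, so the only real content of the corollary is this bookkeeping on top of Theorem A, \cite{hhm} and \cite{flps}; I do not expect any substantial obstacle.
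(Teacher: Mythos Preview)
Your proposal is correct and follows essentially the same approach as the paper: split at the isoperimetric transition $A=4\pi$, invoke \cite{flps} for the disk regime (using $r\le 2<\pi$ so the geodesic disk is Euclidean), and read off the strip regime from Theorem~A after rescaling by $2\pi$. You are simply more explicit than the paper about the bookkeeping at the interface.
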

\begin{proof}
 For a disk of area smaller than or equal to $4\pi$, where the transition between disks and strips takes place,
 its radius is smaller than or equal to two, smaller than $\pi$, and we know that in this range P\'{o}lya's conjecture
 is satisfied for disks~\cite{flps}. Since this is within the range of the first interval where P\'{o}lya's conjecture
 holds for the strip, the result follows.
\end{proof}
Although as we have seen P\'{o}lya's conjecture fails for some values of $h$ below $\pi^2/2$, we actually have that
the corresponding averaged version compatible with Weyl asymptotics, known as the Li-Yau inequalities~\cite{liyau},
holds in this range. In fact, in this case the values of $h$ for which this happens may be extended up to $\pi^2$.
\begin{corx}\label{cor-liyau} The inequalities
\begin{equation}\label{liyau}
\frac{1}{k}\sum_{i=1}^k \lambda_i\geq \frac{k}{h},\quad k=1,2,\dots
\end{equation}
hold for all $k$ if and only if $h\in(0,\pi^2]$.
\end{corx}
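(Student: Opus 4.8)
The plan is to handle necessity directly and reduce sufficiency to a single inequality for the total ``P\'olya deficit''. Since $\lambda_1(\ch)=\pi^2/h^2$, the case $k=1$ of~\eqref{liyau} is $\pi^2/h^2\ge 1/h$, i.e.\ $h\le\pi^2$, which proves the ``only if'' direction and pins down $\pi^2$ as the critical value. For the converse, set $\lambda_i:=\lambda_i(\ch)$ and
\[
 d_i(h):=\Bigl(\tfrac{2i}{h}-\lambda_i\Bigr)_+,\qquad D_k(h):=\sum_{i=1}^k d_i(h),\qquad D_\infty(h):=\sum_{i\ge1}d_i(h).
\]
From $\lambda_i\ge\tfrac{2i}{h}-d_i(h)$ we get $\sum_{i=1}^k\lambda_i\ge\tfrac{k(k+1)}{h}-D_k(h)$, so~\eqref{liyau} holds for a given $k$ as soon as $D_k(h)\le k/h$; since $D_k(h)$ is nondecreasing in $k$ with supremum $D_\infty(h)$, it is enough to prove $D_\infty(h)\le 1/h$ for every $h\in(0,\pi^2]$.

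The next step is to isolate the ground state: $d_1(h)=\bigl(\tfrac{2}{h}-\tfrac{\pi^2}{h^2}\bigr)_+$ vanishes for $h\le\pi^2/2$, and for $h\in(\pi^2/2,\pi^2]$ satisfies $d_1(h)\le 1/h$ with equality exactly at $h=\pi^2$ — once again just the condition $h\le\pi^2$. So what remains is to show that the higher indices contribute at most the residual amount,
\[
 \sum_{i\ge2}d_i(h)\ \le\ \frac1h-d_1(h),
\]
a budget equal to $1/h$ for $h\le\pi^2/2$ but tending to $0$ as $h\uparrow\pi^2$.

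For the higher indices I would use the detailed description behind Theorem~\ref{mainthm}: for $h\le\pi^2$ there are only finitely many pairs $(i,h)$ with $i\ge2$ and $\lambda_i<2i/h$, confined to explicit $h$-windows (the two windows of Theorem~\ref{mainthm} below $\pi^2/2$, where only $\lambda_8$ resp.\ $\lambda_{13}$ offend, plus finitely many further windows inside $(\pi^2/2,\pi^2)$, near the crossings $h=\sqrt{3}\,\pi$ and $h=2\sqrt{2}\,\pi$, on which $\lambda_4$ resp.\ $\lambda_5$ drop below $2i/h$), and on each such window $\lambda_i$ coincides with one explicit branch $n^2+m^2\pi^2/h^2$. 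On such a branch the bound $d_i(h)\le 1/h$ is equivalent to $\lambda_i\ge(2i-1)/h$, i.e.\ to the quadratic inequality $n^2h^2-(2i-1)h+m^2\pi^2\ge0$, which one checks on the window in question. On the two sub-$\pi^2/2$ windows, where a single index offends, this already gives $D_\infty(h)=d_i(h)\le 1/h$; on $(\pi^2/2,\pi^2]$ one must add $d_1(h)$ and the (one or two) further deficits and verify the sum stays below $1/h$ — tightly so near $h=\sqrt{3}\,\pi$, where $d_1+d_4$ nearly exhausts the budget, and at $h=\pi^2$, where $d_1$ alone does while no $\lambda_i$ with $i\ge2$ offends, $\lambda_2=4\pi^2/h^2$ meeting the P\'olya bound $4/h$ with equality there.

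The genuinely non-routine ingredient is the uniform control of the tail, i.e.\ showing that for every $h\le\pi^2$ one has $\lambda_i\ge 2i/h$ for all sufficiently large $i$, so that $D_\infty(h)$ is a finite sum: this amounts to an explicit two-term lower estimate for $N(t)=\#\{(n,m)\in\Z\times\N:\,n^2+m^2\pi^2/h^2<t\}$ showing that the boundary term $-\sqrt{t}$ overtakes the oscillatory remainder past an explicit value of $t$, after which only finitely many $(i,h)$ remain to be inspected by hand — the same kind of estimate already required for Theorem~\ref{mainthm}. Equivalently, one may package the argument through the Riesz mean: by Legendre duality,~\eqref{liyau} for all $k$ is the same as $\sum_i(\Lambda-\lambda_i)_+\le\tfrac h4\Lambda^2$ for all $\Lambda>0$; for $\Lambda<2/h$ this is the nonnegativity of $\tfrac h4\Lambda^2-\Lambda+\pi^2/h^2$ (discriminant $1-\pi^2/h\le0\iff h\le\pi^2$), and for $\Lambda\ge2/h$ it follows from $\sum_i(\Lambda-2i/h)_+\le\tfrac h4\Lambda^2-\tfrac1h$ together with $D_\infty(h)\le1/h$.
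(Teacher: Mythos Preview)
Your reduction is clean, but the key sufficient condition you aim for, $D_\infty(h)\le 1/h$, is \emph{false} on most of $(\pi^2/2,\pi^2]$, so the approach cannot close. Already at the endpoint $h=\pi^2$ the eigenvalues are $m^2+n^2/\pi^2$, and one checks that $\lambda_{10}=1+9/\pi^2$ while the P\'olya bound is $20/\pi^2$; hence
\[
 d_{10}(\pi^2)=\frac{20}{\pi^2}-1-\frac{9}{\pi^2}=\frac{11-\pi^2}{\pi^2}>\frac{1}{\pi^2}
\]
(since $\pi^2<10$), so a \emph{single} deficit already exceeds $1/h$, let alone the full sum $D_\infty$. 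Your picture of the range $(\pi^2/2,\pi^2]$ is also off: it is not true that only $\lambda_4$ and $\lambda_5$ briefly offend there --- as $h$ grows past $\pi^2/2$ a growing collection of higher eigenvalues fail P\'olya (at $h=\pi^2$ one finds $\lambda_7$, $\lambda_{10}$, $\lambda_{13},\dots$ all below their P\'olya bounds), and indeed the paper's own computations show that even the weaker condition $\lambda_k\ge(2k-1)/h$ (your $d_k\le1/h$) fails for $k=7,10,77,86$ somewhere in this range. The Riesz--mean reformulation at the end does not help, since it again feeds through $D_\infty\le1/h$.

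The underlying issue is that replacing each $\lambda_i$ by $2i/h-d_i$ throws away the \emph{surplus} from eigenvalues that strictly satisfy P\'olya, and on $(\pi^2/2,\pi^2]$ that surplus is essential. The paper's argument keeps it implicitly by working inductively: Lemma~\ref{lemma:liyau3} says that once Li--Yau holds up to $k-1$, the single condition $\lambda_k\ge(2k-1)/h$ propagates it to $k$; the second estimate from Section~\ref{bounds} shows this condition holds for all large $k$, and for the finitely many exceptional $k$ (namely $7,10,77,86$) one verifies Li--Yau \emph{directly} by computing the actual partial sums $\sum_{i\le k}\lambda_i$ on a finite partition of $(\pi^2/2,\pi^2]$ into intervals with no eigenvalue crossovers. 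That direct verification is unavoidable here; no uniform bound on the total deficit will do the job.
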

\noindent This result has implications for the satisfiability of P\'{o}lya's conjecture for Cartesian products
involving $\mathbb{S}^{1}\times I_{h}$, such as
$\mathbb{S}^{1}\times I_{h_{1}}\times \dots \times I_{h_{m}}$, which will be briefly discussed in Section~\ref{other}.

The proof of Theorem~\ref{mainthm} follows from a careful analysis of the integer lattice point counting problem associated
with the eigenvalues on $\ch $. In Section~\ref{bounds} we derive estimates for the
eigenvalues which are sufficiently sharp to allow us to reduce the full problem to that of deciding whether or not
P\'{o}lya's conjecture holds on a bounded set in the $(h,\lambda)-$plane. We then devise an algorithm, whose
implementation is given in Appendix~\ref{apx:impl}, to carry out a rigorous search within ths set for possible cases
where the conjecture fails. Section~\ref{liyau-sec} addresses the proof of the Li-Yau inequalities, which follows
from the bounds for individual eigenvalues and two basic lemmas. In Section~\ref{other}, we consider some further
examples, including other domains in $\mathbb{S}^{1}\times \R$ such as
disks with radius larger than $\pi$, and higher-dimensional Cartesian products like $\mathbb{S}^{n}\times I_{h}$
and the already mentioned $\mathbb{S}^{1}\times {\ds\prod_{j=1}^{n}} I_{h_{j}}$.

The specific values of $h$ for which P\'{o}lya's conjecture for $\ch$ holds or fails depend on several factors, some
of which are not obvious a priori. Two ingredients which might appear to be relevant at first sight are
simply-connectedness and the isoperimetric inequaltiy -- for the latter, see~\cite{hhm} for the two-dimensional
cylinder and~\cite{pedr} for a study of the isoperimetric problem in infinite spherical cylinders.
More precisely, all the examples given here where P\'{o}lya's conjecture fails are neither simply-connected nor do
they satisfy the Euclidean isoperimetric inequality. It
is also clear that simply-connected domains which tile $\mathbb{S}^{1}\times \R$ also tile $\R^{2}$ and
will thus satisfy P\'{o}lya's conjecture. However, should a non-simply connected planar domain satisfy the conjecture,
there will also be a copy, scaled down if necessary, which will satisfy it as a subset of $\mathbb{S}^{1}\times \R$.
Moreover, by picking a strip $\ch$ with sufficiently large $h$ so that its first eigenvalue does not satisfy the
conjecture, it will be possible to modify its boundary so that the first eigenvalue will continue not satisfying the
conjecture, while keeping the area fixed but making its perimeter to be as large as desired. We thus see that, in
general, it does not seem reasonable to expect a simple characterisation of domains on $\mathbb{S}^{1}\times \R$
satisfying P\'{o}lya's conjecture. Based on the results in the paper, we will, however, conjecture that the area of
a domain may play a role and that for small enough domains the conjecture is sastisfied.
\begin{conjecture}
 There exists a positive number $A_{0}$ such that P\'{o}lya's conjecture is satisfied for all bounded domains
 $\Omega\subset \mathbb{S}^{1}\times \R$ such that $\left|\Omega\right|\leq A_{0}$. 
\end{conjecture}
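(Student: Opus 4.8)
The natural first step is to separate bounded domains $\Omega\subset\mathbb{S}^1\times\mathbb{R}$ according to how they wrap around the circle factor. Call $\Omega$ \emph{planar-type} if it lifts isometrically under the Riemannian covering $p\colon\mathbb{R}^2\to\mathbb{S}^1\times\mathbb{R}$, $p(x,y)=(x\bmod 2\pi,\,y)$ --- equivalently, if $\Omega$ is contained in the image of some strip $(a,a+2\pi)\times\mathbb{R}$ --- and \emph{essential} otherwise, in which case (being open and connected) $\Omega$ carries a loop generating $\pi_1(\mathbb{S}^1\times\mathbb{R})\cong\mathbb{Z}$ and its projection onto $\mathbb{S}^1$ is onto. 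For a planar-type $\Omega$ one has $\lambda_k(\Omega)=\lambda_k(\widehat\Omega)$ for the lifted Euclidean domain $\widehat\Omega$, with $|\widehat\Omega|=|\Omega|$, so P\'olya's inequality for $\Omega$ coincides with P\'olya's inequality for $\widehat\Omega$. Since the Euclidean inequality is scale invariant and every planar domain admits a scaled copy of arbitrarily small area that still fits inside a strip of width $2\pi$, the conjecture restricted to planar-type domains is \emph{equivalent} to P\'olya's conjecture for all planar domains, independently of the value of $A_0$. Hence no choice of $A_0$ can avoid that outstanding problem, and for planar-type domains one can at best invoke the classes in which it is already known (tiling domains, balls, sectors~\cite{poly2,urak83,flps}, plus any later Euclidean progress). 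The genuinely new content --- and the only part in which the hypothesis $|\Omega|\le A_0$ can do real work --- therefore concerns essential domains, and that is where I would concentrate.

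For essential $\Omega$ the idea is that small area forces the domain to be thin where it wraps. Applying the coarea formula to $\theta\mapsto\mathcal{H}^1\bigl(\Omega\cap(\{\theta\}\times\mathbb{R})\bigr)$ gives $\int_{\mathbb{S}^1}\mathcal{H}^1\bigl(\Omega\cap(\{\theta\}\times\mathbb{R})\bigr)\,d\theta=|\Omega|\le A_0$ with a positive, hence integrable, integrand on all of $\mathbb{S}^1$; a pigeonhole step then produces a bounded number of ``cut angles'' $\theta_1,\dots,\theta_M$ at which $\Omega\cap(\{\theta_i\}\times\mathbb{R})$ is a union of very short intervals, of total length $\ll A_0$. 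Excising these transverse geodesic slits decomposes $\Omega$ into finitely many pieces, each either planar-type or a ``thin essential slab'' trapped between two parallel circles $\mathbb{S}^1\times\{a\}$ and $\mathbb{S}^1\times\{b\}$ with $b-a\le CA_0$, i.e.\ a perturbation of a cylindrical strip $\mathcal{C}_h$ with $h\le CA_0$. For the slabs one would reuse the rigorous lattice-point estimates of Section~\ref{bounds}, which yield P\'olya for $\mathcal{C}_h$ at small $h$ with room to spare (Theorem~\ref{mainthm}), together with a domain-monotonicity and perturbation argument controlling the passage from $\mathcal{C}_h$ to the actual slab; for the planar-type pieces one falls back on the known Euclidean cases, or, when the decomposition can be arranged so that these pieces are themselves thin, on the elementary tiling bound for thin rectangles. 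Summing the piecewise upper bounds on the Dirichlet counting function and comparing with the additive Weyl term $|\Omega|/(4\pi)=\sum_i|\Omega_i|/(4\pi)$ would then close the essential case.

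The main obstacle is twofold. First, as just noted, the planar-type case is equivalent to the full planar P\'olya conjecture and cannot be settled here: the most this paper's methods can be expected to give is a \emph{conditional} statement --- ``modulo P\'olya's conjecture in $\mathbb{R}^2$, the conjecture holds for every $\Omega$ with $|\Omega|\le A_0$'' --- or an unconditional result for essential domains alone. Second, even granting the Euclidean input, making the slab decomposition genuinely perturbative is delicate: ordinary Dirichlet--Neumann bracketing is far too lossy here, since it would require a Neumann (Kr\"oger-type) bound on the pieces, which points the wrong way. One must instead glue eigenfunction estimates directly and cut only where $\Omega$ is already nearly disconnected, keeping each transverse slice short relative to the spectral scale $\pi/\sqrt{A_0}$ so that the cut is spectrally negligible. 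I would expect the bulk of the work, and the place where the estimates of Section~\ref{bounds} must be pushed hardest, to be exactly this perturbation-stability step for thin essential slabs, with the planar-type case remaining the fundamental barrier to an unconditional proof.
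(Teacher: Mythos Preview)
The statement you are addressing is a \emph{Conjecture} in the paper, not a theorem; the authors do not prove it and offer no argument beyond the heuristic motivation in the surrounding paragraph. There is therefore no paper proof to compare your proposal against.

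What you have written is not a proof but a (largely correct) diagnosis of why the conjecture is open. Your reduction for planar-type domains is valid and is the decisive point: any bounded $D\subset\mathbb{R}^2$ may be scaled to have area at most $A_0$ and to sit inside a fundamental strip of the covering $\mathbb{R}^2\to\mathbb{S}^1\times\mathbb{R}$; the lift preserves the Dirichlet spectrum and the area, and P\'olya's inequality is scale-invariant. Hence the conjecture, exactly as stated, implies P\'olya's conjecture for \emph{every} bounded planar domain, which is famously open. No choice of $A_0>0$ sidesteps this, so an unconditional proof is out of reach with current knowledge. The paper does not spell this implication out, but it is consistent with the authors presenting the statement as a conjecture rather than a result.

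Your programme for essential domains is reasonable in spirit but, as you yourself flag, the cutting step is where it breaks as a proof. Dirichlet bracketing after excising slits gives $N_\Omega(\lambda)\ge\sum_i N_{\Omega_i}(\lambda)$, the wrong inequality; Neumann bracketing gives an upper bound but in terms of Neumann counts on the pieces, for which the relevant inequality (Kr\"oger) also points the wrong way. You have not supplied a mechanism that makes the excision spectrally negligible at the sharpness P\'olya requires, and the estimates of Section~\ref{bounds} concern only the exact product $\mathcal{C}_h$, not perturbations of it. So the essential case remains a sketch, and the planar-type case is the fundamental obstruction you correctly identify.
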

\noindent A possible quantitative version of the conjecture would take $A_{0}$ to be the upper-end of the lowest
interval in Corollary~\ref{cor-isop}, namely, $A_{0} = 2\pi\left( 8-\sqrt{64-4\pi^2} \right)$.

\section{Precise intervals for the cylinder $\mathbb{S}^{1}\times I_{h}$: proof of Theorem~\ref{mainthm}\label{bounds}}

The Dirichlet problem on $\SI$ may be thought of as an eigenvalue problem on the rectangle $(0,2\pi) \times (0,h)$
with Dirichlet boundary conditions on the sides $(0,2\pi)\times\{0\}$ and $(0,2\pi)\times\{h\}$, and periodic boundary
conditions on the two remaining sides. The corresponding eigenvalues are thus of the form
\begin{equation}\label{eigcyl}
\lambda=m^2+n^2\frac{\pi^2}{h^2}\quad m\in\Z,n\in\N.
\end{equation}
We denote by $N(\lambda)$ the corresponding counting function, that is,
\[
 \begin{array}{lll}
    N(\lambda) & := & \#\left\{\lambda_i : \lambda_i\leq\lambda\right\}\eqskip
               & = & \#\left\{(m, n) \in \Z\times\N : m^2+n^2\fr{\pi^2}{h^2}\leq\lambda\right\}
\end{array}
\]
and want to estimate the order $k$ of the eigenvalue $\lambda_{k}$ as a function of $\lambda$, that
is, to obtain bounds on $k = N(\lambda)$. In this setting, Pólya's conjecture (in dimension two) states that
$$N(\lambda) \leq \lambda\frac{|\SI|}{4\pi}=\lambda\frac{h}{2}.$$


\subsection{Upper bound for Pólya's satisfiability\label{sec-upp}} We begin by extending the argument based on the first eigenvalue
used in~\cite{fs} which was mentioned in the Introduction to see that, in general, not only can P\'{o}lya's conjecture
fail for domains in $\mathbb{S}^{1}\times\R$, but that an inequality of this type cannot hold, even with a smaller constant.

From~\eqref{eigcyl} we have
$$\lambda_1 = \frac{\pi^2}{h^2}$$
and, given any positive constant $C$, by taking $h>\pi^2/(2C)$ we have
\[
 N\left(\lambda_{1}\right) = 1 > \fr{\pi^2}{2C h}= \fr{h}{2C}\lambda_{1}.
\]
We have thus proved the following
\begin{proposition}\label{noweaker}
    Given any positive constant $C$, there exist domains $\Omega$ on the infinite cylinder $\mathbb{S}^{1}\times\R$ for
    which
    \[
     \lambda_{1} < \fr{4C\pi}{|\Omega|}.
    \]
    In particular, P\'{o}lya's conjecture is not satisfied on the cylindrical strip $\ch$ for $h$ larger than $\pi^2/2$.
\end{proposition}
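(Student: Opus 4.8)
The plan is to take as the family of domains $\Omega$ the cylindrical strips $\ch$ themselves, which also makes the ``in particular'' clause immediate. First I would recall from~\eqref{eigcyl} that the Dirichlet spectrum of $\ch$ consists of the numbers $m^{2}+n^{2}\pi^{2}/h^{2}$ with $m\in\Z$ and $n\in\N$; the smallest of these, attained at $m=0$, $n=1$, is $\lambda_{1}(\ch)=\pi^{2}/h^{2}$. Since $|\ch|=2\pi h$, the right-hand side of the claimed inequality equals $4C\pi/|\ch|=2C/h$.

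The heart of the argument is then a one-line comparison: $\lambda_{1}(\ch)<4C\pi/|\ch|$ is equivalent to $\pi^{2}/h^{2}<2C/h$, i.e.\ to $h>\pi^{2}/(2C)$. Thus any strip $\ch$ with $h$ above the threshold $\pi^{2}/(2C)$ provides a domain on $\mathbb{S}^{1}\times\R$ satisfying the strict inequality in the statement, which proves the first assertion. For the second assertion I would specialise to $C=1$: Pólya's conjecture on $\ch$ is the family of inequalities $\lambda_{k}(\ch)\ge 4k\pi/|\ch|=2k/h$, whose $k=1$ instance is $\lambda_{1}(\ch)\ge 4\pi/|\ch|$; by the displayed equivalence this fails precisely when $h>\pi^{2}/2$.

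There is no genuine obstacle to overcome here. The only point I would be careful to emphasise is that the construction yields a strict inequality for a \emph{fixed} positive constant $C$, chosen in advance: no Pólya-type bound $\lambda_{1}\ge 4C\pi/|\Omega|$ with any prescribed $C>0$ can hold uniformly over all bounded domains in $\mathbb{S}^{1}\times\R$, because one may always push $h$ past $\pi^{2}/(2C)$. This is what makes the conclusion slightly stronger than the bare failure of Pólya's conjecture and worth recording as a separate statement.
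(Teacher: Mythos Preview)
Your proposal is correct and follows essentially the same approach as the paper: both use the strips $\ch$, compute $\lambda_{1}(\ch)=\pi^{2}/h^{2}$ and $|\ch|=2\pi h$, and observe that the inequality fails once $h>\pi^{2}/(2C)$. The only cosmetic difference is that the paper phrases the violation through the counting function, writing $N(\lambda_{1})=1>\tfrac{h}{2C}\lambda_{1}$, whereas you compare $\lambda_{1}$ directly with $4C\pi/|\ch|$; these are equivalent.
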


\subsection{Lower bounds for Pólya's satisfiability}
Let
$$R(\lambda) := N(\lambda) - \lambda\frac{h}{2}.$$
P\'{o}lya's conjecture for $\ch$ is then equivalent to $R(\lambda)$ being non-positive.
This can be approximated by counting integer lattice points inside a half-ellipse on the plane using the following construction.
Given an eigenvalue $\lambda$, consider the ellipse centred at the origin defined by the equation
$$e(x, y) := x^2+y^2\frac{\pi^2}{h^2}=\lambda$$
To each integer lattice point $(m, n)\in \Z\times\N$ we associate the eigenvalue
$\lambda_{m,n}:=m^2+n^2\frac{\pi^2}{h^2}$.
In this way $N(\lambda)$ is given by the number of such points lying inside the ellipse 
\[
E = \{(x,y)\in\R\times\R^+_0 : e(x, y)\leq\lambda\},
\]
which are above the horizontal axis.
\begin{figure}[h]
    \centering
    \includegraphics[height=0.25\textheight]{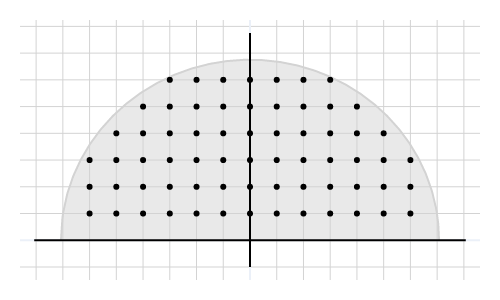}
    \caption{Lattice points and ellipse for $h=3,\lambda=50$}
\end{figure}
We now consider the portion of this region covered by lattice squares given by
$$Q := \bigcup_{\substack{k,l\in\mathds{Z}\\ [k, k+1]\times[l, l+1]\subset E}}[k, k+1]\times[l, l+1].$$
By excluding lattice points lying on the $y$-axis, we can associate each lattice point in $E$ to a
lattice square in $Q$, thus obtaining
\begin{align*}
N(\lambda)&=|Q|+\left\lfloor \frac{h}{\pi}\sqrt{\lambda} \right\rfloor\\
&\leq |E| + \frac{h}{\pi}\sqrt{\lambda} - (|E|-|Q|)\\
&= \lambda\frac{h}{2} + \frac{h}{\pi}\sqrt{\lambda} - |E\setminus Q|
\end{align*}
and
\begin{align*}
R(\lambda)&=N(\lambda)-\lambda\frac{h}{2}\\
&\leq \frac{h}{\pi}\sqrt{\lambda} - |E\setminus Q|.
\end{align*}
As the ellipse is symmetric with respect to the $y$-axis, for area purposes it is enough to consider
the portions of $E$ and $Q$ in the first quadrant
\begin{align*}
E_+ &= E\cap (\R^+_0\times\R^+_0)\\
Q_+ &= Q\cap (\R^+_0\times\R^+_0)
\end{align*}
and now
\begin{equation}
    R(\lambda) \leq \frac{h}{\pi}\sqrt{\lambda} - 2|E_+ \setminus Q_+|\label{rEstimate}
\end{equation}

\subsubsection{First estimate}
Define $\eln := \left\lfloor \frac{h}{\pi}\sqrt{\lambda} \right\rfloor$ and $\epsilon_n := \frac{h}{\pi}\sqrt{\lambda} - \eln$,
and define by
$$
\begin{array}{lll}
e_x(y):= \sqrt{\lambda-y^2\frac{\pi^2}{h^2}} & \mbox{ and } &
e_y(x):= \frac{h}{\pi}\sqrt{\lambda-x^2}
\end{array}
$$
the two functions describing the portion of the ellipse in the first quadrant.

We will approximate the area of $E_+\setminus Q_+$ by partitioning it into the strips

$$S_j := (E_+ \setminus Q_+)\cap(\R\times[j,j+1])\;,\; j=0,1,\dots,\eln$$

Since all these are convex, for $j < \eln$ we can bound the area of each $S_j$ from below by considering the triangle with vertices at $(e_x(j),j)$, $(e_x(j+1),j+1)$, and $(e_x(j+1), j)$.
Similarly, $|S_{\eln}|$ is bounded by the area of the triangle with vertices at $(e_x(\eln),\eln), (0,\eln +\epsilon_n), (0,\eln)$.
\begin{figure}[h]
    \begin{subfigure}{.5\textwidth}
        \centering
        \includegraphics[height=0.25\textheight]{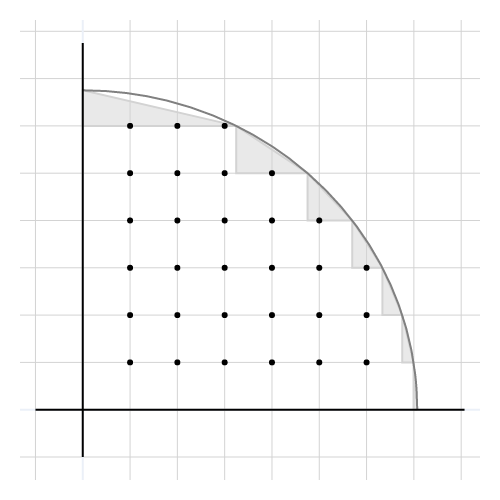}
        \caption{First $R(\lambda)$ estimate for $h=3,\lambda=50$}
    \end{subfigure}%
    \begin{subfigure}{.5\textwidth}
        \centering
        \includegraphics[height=0.25\textheight]{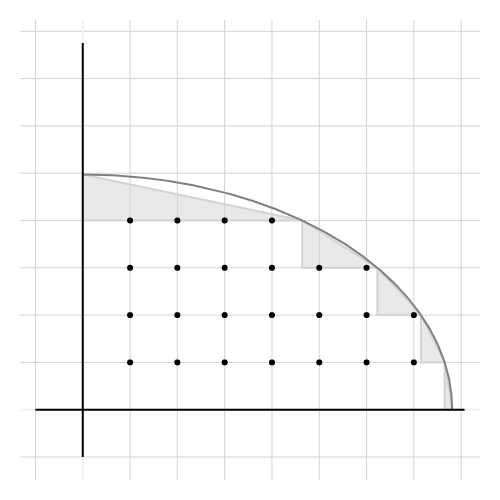}
        \caption{First $R(\lambda)$ estimate for $h=2,\lambda=61$}
    \end{subfigure}
    \caption{First $R(\lambda)$ estimates}
\end{figure}
Thus
\begin{align*}
    \frac{1}{2}|E_+ \setminus Q_+|&\geq \frac{1}{2}e_x(\eln)\epsilon_n + \sum_{j=0}^{\eln-1}\fr{1}{2}(e_x(j)-e_x(j+1))\\
    &=\frac{1}{2}\left[e_x(\eln)\epsilon_n + e_x(0)-e_x(\eln)\right]\\
    &=\frac{1}{2}\left[e_x(\eln)(\epsilon_n-1) + \sqrt{\lambda}\right]
\end{align*}
and
\begin{align*}
    R(\lambda) &\leq \frac{h}{\pi}\sqrt{\lambda} - \left[e_x(\eln)(\epsilon_n-1) + \sqrt{\lambda}\right]\\
    &=\left(\frac{h}{\pi}-1\right)\sqrt{\lambda} - \sqrt{\lambda-\frac{\pi^2}{h^2}\left(\frac{h}{\pi}\sqrt{\lambda}-\epsilon_n\right)^2}(\epsilon_n-1)\\
    &\leq\left(\frac{h}{\pi}-1\right)\sqrt{\lambda} + \sqrt{\frac{\pi^2}{h^2}\left(2\frac{h}{\pi}\sqrt{\lambda}\epsilon_n\right)}(1-\epsilon_n)\\
    &=\left(\frac{h}{\pi}-1\right)\sqrt{\lambda} + \sqrt{\frac{\pi}{h}}\sqrt{2\epsilon_n}(1-\epsilon_n)\lambda^{1/4}.
\end{align*}
Note that $f(x) = \sqrt{2x}(1-x)$ is maximal in $[0,1)$ when $x=\frac{1}{3}$, and thus
\begin{align*}
    R(\lambda)&\leq\left(\frac{h}{\pi}-1\right)\sqrt{\lambda} + \sqrt{\frac{\pi}{h}}\left(\fr{2}{3}\right)^{3/2}\lambda^{1/4}.
\end{align*}
When $h<\pi$ the coefficient of the leading term is negative and
\begin{align*}
    R(\lambda)\leq 0 &\Leftarrow \left(\frac{h}{\pi}-1\right)\sqrt{\lambda} + \sqrt{\frac{\pi}{h}}\left(\fr{2}{3}\right)^{3/2}\lambda^{1/4}\leq 0\\
    &\Leftrightarrow \lambda^{1/4}\geq \frac{\sqrt{\pi/h}\left(\fr{2}{3}\right)^{3/2}}{1-h/\pi}\\
    &\Leftrightarrow \sqrt{\lambda}\geq \frac{8\pi^3}{27h(\pi-h)^2}.
\end{align*}
We thus have
\begin{proposition}
    $R(\lambda) \leq 0$ when $h<\pi$ and $\sqrt{\lambda}\geq \fr{8\pi^3}{27h(\pi-h)^2}$.
\end{proposition}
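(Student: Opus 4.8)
The statement is the end point of the chain of estimates developed in this subsubsection, so the plan is simply to assemble those steps into one argument. I would start from the estimate~\eqref{rEstimate}, namely $R(\lambda)\le\tfrac{h}{\pi}\sqrt{\lambda}-2\,|E_+\setminus Q_+|$, which already reduces Pólya's conjecture for $\ch$ to a lower bound on the area of the region trapped between the ellipse $e(x,y)=\lambda$ and the inscribed lattice squares in the first quadrant.

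The only genuinely geometric step is the lower bound on $|E_+\setminus Q_+|$. I would cut this region into the horizontal slabs $S_j=(E_+\setminus Q_+)\cap(\R\times[j,j+1])$ for $j=0,\dots,\eln$; each $S_j$ is convex, being the intersection of the subgraph $\{x\le e_x(y)\}$ of the concave function $e_x$ with a horizontal strip and a half-plane $\{x\ge c\}$. Into each slab with $j<\eln$ one inscribes the right triangle with vertices $(e_x(j),j)$, $(e_x(j+1),j+1)$, $(e_x(j+1),j)$, and into the top slab $j=\eln$ the triangle with vertices $(e_x(\eln),\eln)$, $(0,\eln+\epsilon_n)$, $(0,\eln)$, whose last vertex is the point where the ellipse meets the $y$-axis; convexity guarantees these triangles lie inside the respective $S_j$. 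Their areas $\tfrac12(e_x(j)-e_x(j+1))$ telescope to $\tfrac12(\sqrt{\lambda}-e_x(\eln))$, and adding the top contribution $\tfrac12 e_x(\eln)\epsilon_n$ gives $|E_+\setminus Q_+|\ge e_x(\eln)(\epsilon_n-1)+\sqrt{\lambda}$.

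Feeding this back into~\eqref{rEstimate} yields $R(\lambda)\le\bigl(\tfrac{h}{\pi}-1\bigr)\sqrt{\lambda}-e_x(\eln)(\epsilon_n-1)$. Writing $\eln=\tfrac{h}{\pi}\sqrt{\lambda}-\epsilon_n$ and expanding the square, $\lambda-\tfrac{\pi^2}{h^2}\eln^2=\tfrac{2\pi}{h}\sqrt{\lambda}\,\epsilon_n-\tfrac{\pi^2}{h^2}\epsilon_n^2\le\tfrac{2\pi}{h}\sqrt{\lambda}\,\epsilon_n$, so $e_x(\eln)\le\sqrt{\tfrac{\pi}{h}}\,\sqrt{2\epsilon_n}\,\lambda^{1/4}$; since $\epsilon_n-1<0$ this turns the previous line into $R(\lambda)\le\bigl(\tfrac{h}{\pi}-1\bigr)\sqrt{\lambda}+\sqrt{\tfrac{\pi}{h}}\,\sqrt{2\epsilon_n}\,(1-\epsilon_n)\,\lambda^{1/4}$. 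The function $f(x)=\sqrt{2x}(1-x)$ on $[0,1)$ has its only critical point at $x=\tfrac13$, with $f(\tfrac13)=(2/3)^{3/2}$, so the bound simplifies to $R(\lambda)\le\bigl(\tfrac{h}{\pi}-1\bigr)\sqrt{\lambda}+\sqrt{\tfrac{\pi}{h}}\,(2/3)^{3/2}\,\lambda^{1/4}$, now independent of $\epsilon_n$.

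For $h<\pi$ the coefficient $\tfrac{h}{\pi}-1$ is strictly negative; factoring out $\lambda^{1/4}>0$, the right-hand side is non-positive precisely when $\lambda^{1/4}\ge\sqrt{\pi/h}\,(2/3)^{3/2}/(1-h/\pi)$, and squaring this and simplifying the constants ($8/27$ from $(2/3)^3$ and $\pi^2/(\pi-h)^2$ from $1/(1-h/\pi)^2$) rewrites the threshold as $\sqrt{\lambda}\ge\fr{8\pi^3}{27h(\pi-h)^2}$, which is the assertion. The only place that demands real care is this geometric step — convexity of each slab together with the check that the three chosen vertices actually lie in $\overline{S_j}$, in particular that $e_x(j+1)$ is at least the abscissa of the last full unit square in slab $j$; everything after that is elementary algebra and a single-variable maximisation.
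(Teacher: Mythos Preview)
Your proof is correct and follows the paper's argument essentially line for line: the same horizontal slab decomposition of $E_+\setminus Q_+$, the same inscribed triangles, the same telescoping sum, the same bound $e_x(\eln)\le\sqrt{\pi/h}\,\sqrt{2\epsilon_n}\,\lambda^{1/4}$, and the same maximisation of $\sqrt{2x}(1-x)$ at $x=1/3$. The only slip is a missing factor of $\tfrac12$ when you state the intermediate bound ``$|E_+\setminus Q_+|\ge e_x(\eln)(\epsilon_n-1)+\sqrt{\lambda}$'' --- the triangle areas sum to half of this --- but since you then feed it into $R(\lambda)\le\tfrac{h}{\pi}\sqrt{\lambda}-2|E_+\setminus Q_+|$ and arrive at the correct next line, this is just a typo rather than an error in the argument.
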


\subsubsection{Second estimate}
Let $\elm := \left\lfloor \sqrt{\lambda} \right\rfloor$ and $\epsilon_m := \sqrt{\lambda} - \elm$.

Let $(m_0,n_0)$ be the point in the ellipse where its slope is $-1$, that is,
$$e_y'(m_0)=-1\;,\;n_0=e_y(m_0),$$
and let
\begin{align*}
m_i&:= \lfloor m_0\rfloor + i\;,\; i=1,2,\dots,\elm-\lfloor m_0+1\rfloor\\
n_j&:= \lfloor n_0\rfloor + j\;,\; j=1,2,\dots,\elm-\lfloor n_0+1\rfloor.
\end{align*}

Similarly to the first estimate, we partition $E_+\setminus Q_+$ into several parts
\begin{align*}
    R_i &:= (E_+\setminus Q_+)\cap([m_i,m_{i+1}]\times\R)\;,\; i=0,1,2,\dots,\elm-\lfloor m_0\rfloor\\
    S_j &:= (E_+\setminus Q_+)\cap(\R\times[n_j,n_{j+1}])\;,\; j=0,1,2,\dots,\eln-\lfloor n_0\rfloor\\
    T&:= (E_+-Q_+)\cap([0,m_0]\times[0,n_0])
\end{align*}
Since all these are convex, we can estimate their area from below using the following triangles (see Figure~\ref{seconRestimate})
\begin{align*}
    R_i \supset & \text{ Triangle with vertices at }(m_i,e_y(m_i)), (m_{i+1},e_y(m_{i+1})), (m_{i+1},e_y(m_i))\\
    S_j \supset & \text{ Triangle with vertices at }(e_x(n_j),n_j), (e_x(n_{j+1}),n_{j+1}), (e_x(n_{j+1}),n_j)\\
    T \supset & \text{ Triangles with vertices at }(m_0, n_0), (m_0, \min\{\lfloor n_0\rfloor,e_y(m_1)\}), (\lfloor m_0 \rfloor, \min\{\lfloor n_0\rfloor,e_y(m_1)\})\\
    &\text{ or }(m_0, n_0), (\min\{\lfloor m_0\rfloor,e_x(n_1)\}, n_0), (\min\{\lfloor m_0\rfloor,e_x(n_1)\}, \lfloor n_0\rfloor)
\end{align*}
\begin{figure}[h]
    \begin{subfigure}{.5\textwidth}
        \centering
        \includegraphics[height=0.25\textheight]{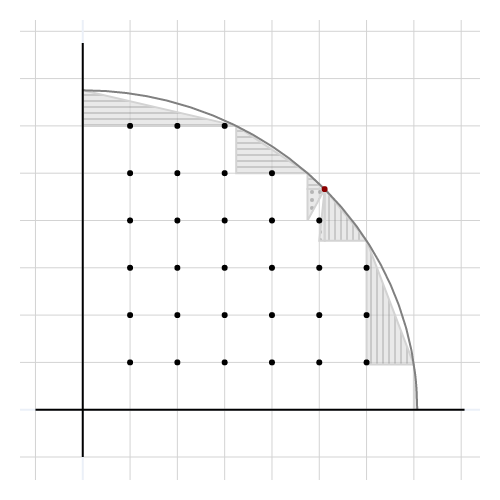}
        \caption{Second $R(\lambda)$ estimate for $h=3,\lambda=50$}
    \end{subfigure}%
    \begin{subfigure}{.5\textwidth}
        \centering
        \includegraphics[height=0.25\textheight]{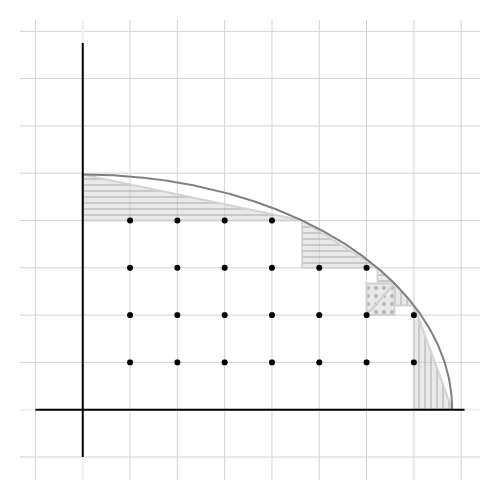}
        \caption{Second $R(\lambda)$ estimate for $h=2,\lambda=61$}
    \end{subfigure}
    \caption{Second $R(\lambda)$ estimates\label{seconRestimate}. $R_i$, $S_j$ and $T$ triangles are shaded with
    vertical lines, horizontal lines, and dots, respectively.}
\end{figure}
to obtain
\begin{align*}
    \left|\bigcup_{i=0}^{\elm-\lfloor m_0 \rfloor} R_i\right|&\geq\sum_{j=0}^{\elm-\lfloor m_0 \rfloor}\frac{1}{2}(e_y(m_i)-e_y(m_{i+1}))(m_{i+1}-m_i)\\
        &= \frac{1}{2}\left[e_y(m_{\elm-\lfloor m_0\rfloor})\epsilon_m + e_y(m_1)-e_y(m_{\elm-\lfloor m_0\rfloor}) + (e_y(m_0)-e_y(m_1))(m_1-m_0)\right]\\
        &= \frac{1}{2}\Big[e_y(\elm)(\epsilon_m-1) + e_y(m_1) + (e_y(m_0)-e_y(m_1))(\lfloor m_0 \rfloor+1-m_0)\Big]\\
        &= \frac{1}{2}\left[\frac{h}{\pi}\sqrt{\lambda - (\sqrt{\lambda}-\epsilon_m)^2}(\epsilon_m-1)
        + e_y(m_0) - (e_y(m_0)-e_y(m_1))(m_0-\lfloor m_0 \rfloor)\right]\\
        &= \frac{1}{2}\left[-\frac{h}{\pi}\sqrt{2\sqrt{\lambda}\epsilon_m - \epsilon_m^2}(1-\epsilon_m)
        + n_0 - (n_0-e_y(m_1))(m_0-\lfloor m_0 \rfloor)\right]\\
        &\geq \frac{1}{2}\left[-\frac{h}{\pi}\sqrt{2\epsilon_m}(1-\epsilon_m)\lambda^{1/4}
        + n_0 - (n_0-e_y(m_1))(m_0-\lfloor m_0 \rfloor)\right].
\end{align*}
We again use the fact that $f(x)=\sqrt{2x}(1-x)$ attains its maximum in $[0,1)$ when $x=\frac{1}{3}$, and thus
\begin{align*}        
    \left|\bigcup_{i=0}^{\elm-\lfloor m_0 \rfloor} R_i\right|&\geq \frac{1}{2}\left[-\frac{h}{\pi}\left(\fr{2}{3}\right)^{3/2}\lambda^{1/4} + n_0 - (n_0-e_y(m_1))(m_0-\lfloor m_0 \rfloor)\right].
\end{align*}
Similarly
$$\left|\bigcup_{j=0}^{\eln-\lfloor n_0 \rfloor} S_j\right|\geq
    \frac{1}{2}\left[-\sqrt{\frac{\pi}{h}}\left(\fr{2}{3}\right)^{3/2}\lambda^{1/4}+ m_0 - (m_0-e_x(n_1))(n_0-\lfloor n_0 \rfloor)\right],
    $$
and
$$|T|\geq \frac{1}{2}\left[(m_0-\lfloor m_0 \rfloor)(n_0-e_y(m_1))+(n_0-\lfloor n_0 \rfloor)(m_0-e_x(n_1))\right].$$
Finally
\begin{alignat*}{3}
    \frac{1}{2}|E_+\setminus Q_+|&\geq \frac{1}{2}\bigg[&&-\frac{h}{\pi}\left(\fr{2}{3}\right)^{3/2}\lambda^{1/4} + n_0 - (n_0-e_y(m_1))(m_0-\lfloor m_0 \rfloor)\\
        & & &-\sqrt{\frac{\pi}{h}}\left(\fr{2}{3}\right)^{3/2}\lambda^{1/4}+ m_0 - (m_0-e_x(n_1))(n_0-\lfloor n_0 \rfloor)\\
        & & & \hspace*{5mm}+\left(m_0-\lfloor m_0 \rfloor\right)(n_0-e_y(m_1))+(n_0-\lfloor n_0 \rfloor)(m_0-e_x(n_1))\bigg]\\
    &=\frac{1}{2}\Big[&&-\left(\frac{h}{\pi}+\sqrt{\frac{\pi}{h}}\right)\left(\fr{2}{3}\right)^{3/2}\lambda^{1/4} + m_0 + n_0 \Big].
\end{alignat*}
We can now determine the points on the ellipse with slope $-1$, namely,
$$e_y'(m_0)=-1\Leftrightarrow \frac{h m_0}{\pi\sqrt{\lambda-m_0}} = 1 \Leftrightarrow m_0 = \frac{\sqrt{\lambda}}{\sqrt{(h/\pi)^2+1}}$$
$$n_0=e_y(m_0)=\frac{h}{\pi}\sqrt{\lambda-\frac{\lambda}{(h/\pi)^2+1}} = \frac{(h/\pi)^2\sqrt{\lambda}}{\sqrt{(h/\pi)^2+1}},$$
and substitute in the expression above to obtain
$$
\frac{1}{2}|E_+\setminus Q_+|\geq \frac{1}{2}\bigg[-\left(\frac{h}{\pi}+\sqrt{\frac{\pi}{h}}\right)\left(\fr{2}{3}\right)^{3/2}\lambda^{1/4} + \frac{\sqrt{\lambda}}{\sqrt{(h/\pi)^2+1}} + \frac{(h/\pi)^2\sqrt{\lambda}}{\sqrt{(h/\pi)^2+1}} \bigg].
$$
From~\eqref{rEstimate} we have
\begin{align}
    R(\lambda)&\leq\frac{h}{\pi}\sqrt{\lambda}+\left(\frac{h}{\pi}+\sqrt{\frac{\pi}{h}}\right)\left(\fr{2}{3}\right)^{3/2}\lambda^{1/4}-\frac{((h/\pi)^2+1)\sqrt{\lambda}}{\sqrt{(h/\pi)^2+1}}\nonumber \\
        &=\left(\fr{h}{\pi}-\sqrt{\left(\fr{h}{\pi}\right)^2+1}\right)\sqrt{\lambda}+\left(\frac{h}{\pi}+\sqrt{\frac{\pi}{h}}\right)\left(\fr{2}{3}\right)^{3/2}\lambda^{1/4}\nonumber \\
        &=-\frac{\sqrt{h^2+\pi^2}-h}{\pi}\sqrt{\lambda}+\left(\frac{h}{\pi}+\sqrt{\frac{\pi}{h}}\right)
        \left(\fr{2}{3}\right)^{3/2}\lambda^{1/4}\label{polya-estim2}. \\
\end{align}
But
\begin{align*}
0&\geq-\frac{\sqrt{h^2+\pi^2}-h}{\pi}\sqrt{\lambda}+\left(\frac{h}{\pi}+\sqrt{\frac{\pi}{h}}\right)\left(\fr{2}{3}\right)^{3/2}\lambda^{1/4}\\
\Leftrightarrow \lambda^{1/4}&\geq \left(\frac{h}{\pi}+\sqrt{\frac{\pi}{h}}\right)\left(\fr{2}{3}\right)^{3/2}\frac{\pi}{\sqrt{h^2+\pi^2}-h}\\
&=\left(\fr{2}{3}\right)^{3/2}\frac{h+\pi^{3/2}\sqrt{1/h}}{\sqrt{h^2+\pi^2}-h}\\
\Leftrightarrow \sqrt{\lambda}&\geq\fr{8}{27}\frac{(h+\pi^{3/2}\sqrt{1/h})^2}{(\sqrt{h^2+\pi^2}-h)^2}\\
&=\fr{8}{27}\frac{(h/\pi+\sqrt{\pi/h})^2}{(\sqrt{(h/\pi)^2+1}-h/\pi)^2}
\end{align*}
and we have thus proven the following bound.
\begin{proposition}\label{secestim}
    $R(\lambda) \leq 0$ when $\sqrt{\lambda}\geq \fr{8}{27}\left(\fr{h/\pi+\sqrt{\pi/h}}{\sqrt{(h/\pi)^2+1}-h/\pi}\right)^2$.
\end{proposition}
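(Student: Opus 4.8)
The plan is to start from the bound~\eqref{rEstimate}, namely $R(\lambda)\le\frac h\pi\sqrt\lambda-2|E_+\setminus Q_+|$, which reduces the whole proposition to producing a sufficiently strong \emph{lower} bound for $|E_+\setminus Q_+|$, the area of the thin sickle-shaped region trapped between the first-quadrant arc of the ellipse and the union of the unit lattice squares it contains. The key idea is to slice this region in two different ways, switching between them at the point $(m_0,n_0)$ of the arc where the tangent has slope $-1$: to the right of $x=m_0$ the arc is shallow, so the escaped region sits inside thin vertical slabs $R_i$ cut along consecutive relevant integers; above $y=n_0$ the arc is steep, so it sits inside horizontal slabs $S_j$; and the remaining corner piece $T=(E_+\setminus Q_+)\cap([0,m_0]\times[0,n_0])$ is handled separately.

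Each slab, and the corner piece, is convex, being an intersection of the convex set $E$ with a strip; so I would bound each from below by the triangle spanned by the two endpoints of the bounding ellipse arc together with the natural right-angle vertex, which gives the explicit triangle estimates for $|R_i|$, $|S_j|$, $|T|$. Summing the $R_i$ estimates, the differences $e_y(m_i)-e_y(m_{i+1})$ telescope, leaving a boundary contribution $e_y(m_0)=n_0$, a fractional-offset cross-term at $m_0$, and a top term $\frac h\pi\sqrt{2\sqrt\lambda\,\epsilon_m-\epsilon_m^2}\,(1-\epsilon_m)$; bounding the square root by $\sqrt{2\epsilon_m}\,\lambda^{1/4}$ and invoking the elementary fact that $f(x)=\sqrt{2x}(1-x)$ is maximised on $[0,1)$ at $x=1/3$ with value $(2/3)^{3/2}$ turns this into $\ge\frac12\!\left[-\frac h\pi(2/3)^{3/2}\lambda^{1/4}+n_0-(\text{offset term})\right]$. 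The $S_j$ sum is treated symmetrically, producing an $m_0$ term, a $-\sqrt{\pi/h}(2/3)^{3/2}\lambda^{1/4}$ term, and a second offset cross-term; the crucial cancellation is that the lower bound for $|T|$ is exactly the sum of the two offset cross-terms, so they drop out and one is left with the clean inequality $\frac12|E_+\setminus Q_+|\ge\frac12\!\left[-\big(\frac h\pi+\sqrt{\pi/h}\big)(2/3)^{3/2}\lambda^{1/4}+m_0+n_0\right]$.

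It then remains to evaluate the slope-$(-1)$ point from $e_y'(m_0)=-1$, which gives $m_0=\sqrt\lambda/\sqrt{(h/\pi)^2+1}$ and $n_0=(h/\pi)^2\sqrt\lambda/\sqrt{(h/\pi)^2+1}$, hence $m_0+n_0=\sqrt{(h/\pi)^2+1}\,\sqrt\lambda$. Substituting back into~\eqref{rEstimate} yields $R(\lambda)\le\big(\frac h\pi-\sqrt{(h/\pi)^2+1}\big)\sqrt\lambda+\big(\frac h\pi+\sqrt{\pi/h}\big)(2/3)^{3/2}\lambda^{1/4}$. Since the coefficient of $\sqrt\lambda$ is strictly negative for every $h>0$, requiring the right-hand side to be non-positive is a single inequality in $\lambda^{1/4}$, and solving it gives the announced threshold $\sqrt\lambda\ge\frac8{27}\big((h/\pi+\sqrt{\pi/h})/(\sqrt{(h/\pi)^2+1}-h/\pi)\big)^2$.

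The part I expect to be most delicate is the bookkeeping around the corner piece $T$: one must choose consistently between the two admissible inscribed triangles (according to whether $e_y(m_1)$ lies above or below $\lfloor n_0\rfloor$, and similarly for $e_x(n_1)$), check that the triangle vertices genuinely lie inside the relevant convex slabs, and verify that the offset cross-terms emerging from the telescoped $R$- and $S$-sums match the $T$-estimate exactly so that they cancel. Degenerate configurations, such as $\lfloor m_0\rfloor$ or $\lfloor n_0\rfloor$ being too small, or the slope-$(-1)$ point falling on a lattice line, will need a quick separate check. Everything else is routine telescoping, the one-variable maximisation of $f$, and elementary algebra.
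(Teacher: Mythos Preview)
Your proposal is correct and follows essentially the same route as the paper: partition $E_+\setminus Q_+$ into the vertical slabs $R_i$, the horizontal slabs $S_j$, and the corner piece $T$ at the slope-$(-1)$ point $(m_0,n_0)$, bound each by inscribed triangles, telescope, use $\max_{[0,1)}\sqrt{2x}(1-x)=(2/3)^{3/2}$, let the $T$-estimate cancel the two offset cross-terms, and then substitute $m_0+n_0=\sqrt{(h/\pi)^2+1}\,\sqrt\lambda$ into~\eqref{rEstimate}. Your flagged delicate point---the case split in the definition of the triangle inside $T$ and the verification that its area exactly matches the leftover cross-terms---is precisely where the paper's argument requires care as well.
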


\subsubsection{Lower bounds}
We have
$$\lambda_1 = \frac{\pi^2}{h^2} \Rightarrow \sqrt{\lambda}\geq\frac{\pi}{h}$$
and, for $h<\pi$,
$$\frac{\pi}{h} \geq\frac{8\pi^3}{27h(\pi-h)^2}\Leftrightarrow
(\pi-h)^2 \geq\frac{8}{27}\pi^2\Leftrightarrow
h\leq \frac{1}{9}(9-2\sqrt{6})\pi<\pi.$$
Thus
$$\sqrt{\lambda} \geq \frac{8\pi^3}{27h(\pi-h)^2} \text{ when } h\leq \frac{1}{9}(9-2\sqrt{6})\pi,$$
and finally
$$R(\lambda) \leq 0 \text{ when } h\leq \frac{1}{9}(9-2\sqrt{6})\pi.$$
We have thus
\begin{proposition}
    $\SI$ satisfies Pólya's conjecture for $h\leq \left(1-\fr{2\sqrt{6}}{9}\right)\pi$.
\end{proposition}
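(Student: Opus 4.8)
The plan is to combine the ``first estimate'' proposition above with the trivial spectral lower bound coming from the bottom eigenvalue, and then to isolate the range of $h$ on which the two are compatible. First I would note that, by~\eqref{eigcyl}, the smallest eigenvalue of $\ch$ is $\lambda_{1}=\pi^{2}/h^{2}$, so \emph{every} eigenvalue $\lambda$ of $\ch$ satisfies $\sqrt{\lambda}\geq \pi/h$. Since Pólya's conjecture for $\ch$ is equivalent to $R(\lambda)\leq 0$ for every eigenvalue $\lambda$, it is enough to determine those $h$ for which the hypothesis of the first-estimate proposition — namely $h<\pi$ together with $\sqrt{\lambda}\geq \tfrac{8\pi^{3}}{27h(\pi-h)^{2}}$ — is already forced by the cheap bound $\sqrt{\lambda}\geq\pi/h$.

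Concretely, the question becomes: for which $h$ does
\[
\frac{\pi}{h}\ \geq\ \frac{8\pi^{3}}{27\,h(\pi-h)^{2}}\,?
\]
Because $h>0$, this is equivalent to $(\pi-h)^{2}\geq\tfrac{8}{27}\pi^{2}$. Restricting first to $h<\pi$ (so that $\pi-h>0$ and taking square roots is monotone), the inequality reads $\pi-h\geq\sqrt{8/27}\,\pi=\tfrac{2\sqrt{6}}{9}\pi$, that is, $h\leq\bigl(1-\tfrac{2\sqrt{6}}{9}\bigr)\pi$. Since $\tfrac{2\sqrt{6}}{9}>0$ we have $\bigl(1-\tfrac{2\sqrt{6}}{9}\bigr)\pi<\pi$, so restricting to $h\leq\bigl(1-\tfrac{2\sqrt{6}}{9}\bigr)\pi$ in particular forces $h<\pi$, and the preliminary assumption costs nothing. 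Hence for every such $h$ and every eigenvalue $\lambda$ of $\ch$ one has $\sqrt{\lambda}\geq\pi/h\geq\tfrac{8\pi^{3}}{27h(\pi-h)^{2}}$, so the first-estimate proposition gives $R(\lambda)\leq 0$; this is exactly Pólya's conjecture for $\ch=\SI$ in that range of $h$.

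There is essentially no hard step: all the real analytic content was already spent in setting up the lattice-point estimate that yields the first-estimate proposition, and what remains is the bookkeeping of one quadratic inequality in $h$. The only point that needs care is the direction of the implication when passing from $(\pi-h)^{2}\geq\tfrac{8}{27}\pi^{2}$ to an upper bound on $h$ — handled by first assuming $h<\pi$ — together with the routine check that the resulting threshold $\bigl(1-\tfrac{2\sqrt{6}}{9}\bigr)\pi$ is genuinely below $\pi$, so that this assumption is automatically satisfied on the stated interval.
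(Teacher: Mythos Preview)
Your argument is correct and matches the paper's own proof essentially line for line: both start from $\sqrt{\lambda}\geq\lambda_{1}^{1/2}=\pi/h$, compare this with the threshold $\tfrac{8\pi^{3}}{27h(\pi-h)^{2}}$ from the first estimate, reduce to $(\pi-h)^{2}\geq\tfrac{8}{27}\pi^{2}$, and solve to obtain $h\leq\bigl(1-\tfrac{2\sqrt{6}}{9}\bigr)\pi$, noting that this already forces $h<\pi$. The only difference is expository, in that you spell out a bit more carefully why the assumption $h<\pi$ costs nothing.
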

For the remaining values $\frac{1}{9}(9-2\sqrt{6})\pi < h \leq \frac{\pi^2}{2}$ we first note that the
second estimate given in Proposition~\ref{secestim} is convex in $h$ and bounded above by
\begin{alignat*}{3}
    \left(\fr{2}{3}\right)^3\frac{(h/\pi+\sqrt{\pi/h})^2}{(\sqrt{(h/\pi)^2+1}-h/\pi)^2}
    &\leq \max\Bigg\{
    &&\left(\fr{2}{3}\right)^3\left.\frac{(h/\pi+\sqrt{\pi/h})^2}{(\sqrt{(h/\pi)^2+1}-h/\pi)^2}\right|_{h=\frac{1}
    {9}(9-2\sqrt{6})\pi},\\
    & &&\left(\fr{2}{3}\right)^3\left.\frac{(h/\pi+\sqrt{\pi/h})^2}{(\sqrt{(h/\pi)^2+1}-h/\pi)^2}\right|
    _{h=\frac{\pi^2}{2}}
    \Bigg\}\\
    &<20,&&
\end{alignat*}
yielding $R(\lambda) < 0$  whenever $\sqrt{\lambda}\geq 20.$
\begin{proposition}
    $\SI$  satisfies Pólya's conjecture for $\frac{1}{9}(9-2\sqrt{6})\pi < h \leq \frac{\pi^2}{2}$ and $\lambda \geq 400$
\end{proposition}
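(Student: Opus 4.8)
The plan is to derive the statement directly from Proposition~\ref{secestim}. That proposition gives $R(\lambda)\le 0$ as soon as $\sqrt\lambda\ge g(h)$, where
\[
g(h):=\fr{8}{27}\left(\fr{h/\pi+\sqrt{\pi/h}}{\sqrt{(h/\pi)^2+1}-h/\pi}\right)^2 .
\]
Since $\lambda\ge 400$ is the same as $\sqrt\lambda\ge 20$, it suffices to prove that $g(h)<20$ for every $h$ in $J:=\left(\tfrac19(9-2\sqrt6)\pi,\ \tfrac{\pi^2}{2}\right]$: then for such $h$ and $\lambda\ge 400$ we get $\sqrt\lambda\ge 20>g(h)$, hence $R(\lambda)\le 0$, i.e. $N(\lambda)\le \lambda h/2$, which is exactly Pólya's conjecture on $\SI$ in this range. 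Note this step genuinely needs Proposition~\ref{secestim} rather than the first estimate, since $\pi^2/2>\pi$ and so $J$ is not contained in $(0,\pi)$.

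To bound $g$ on $J$, first I would rationalise: from $(\sqrt{x^2+1}-x)(\sqrt{x^2+1}+x)=1$ one gets $g(h)=\fr{8}{27}\,\phi(h/\pi)^2$ with $\phi(t):=\left(t+t^{-1/2}\right)\left(t+\sqrt{t^2+1}\right)$, and in the variable $t=h/\pi$ the interval $J$ becomes $t\in\big(\tfrac19(9-2\sqrt6),\ \tfrac\pi2\big]$. Then I would control the two factors of $\phi$ separately on the closed interval $\big[\tfrac19(9-2\sqrt6),\tfrac\pi2\big]$. The map $t\mapsto t+t^{-1/2}$ is convex on $(0,\infty)$ (its second derivative is $\tfrac34 t^{-5/2}>0$), so on a closed interval it attains its maximum at an endpoint; a direct rational estimate shows the value at each of the two endpoints is below $2.37$. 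The map $t\mapsto t+\sqrt{t^2+1}$ is strictly increasing, so on $J$ it is at most its value at $t=\pi/2$, which is below $3.44$. Multiplying gives $\phi(t)<2.37\cdot 3.44<8.16$ on $J$, whence $g(h)<\fr{8}{27}\cdot 8.16^2<20$, as needed. (Equivalently, one may check that $g$ itself is convex in $h$ on $J$ — which is the route alluded to just before the statement — and then evaluate $g$ at the two endpoints, obtaining roughly $2.7$ and $19.6$; the factorisation above is simply a way to avoid differentiating $g$ twice.)

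The only delicate point, and the one I expect to take the most care, is to make the two endpoint estimates genuinely rigorous rather than merely numerical: one replaces $\pi$, $\sqrt6$ and the nested square roots by explicit rational lower and upper bounds (for instance $3.141<\pi<3.1416$ and $2.449<\sqrt6<2.4495$) and checks the resulting rational inequalities. There is comfortable slack — the binding endpoint $h=\pi^2/2$ yields about $19.6$ against the target $20$, and the other endpoint is far below — so crude enclosures suffice and no sharp estimation is required. This is also why the exact form of the left endpoint $\tfrac19(9-2\sqrt6)\pi$ is immaterial here: any starting value in $J$ works, the interval mattering only because for smaller $h$ the argument of the previous subsection already applies, as recorded above.
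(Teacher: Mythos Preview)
Your proposal is correct and follows essentially the same route as the paper: both invoke Proposition~\ref{secestim} and reduce the claim to showing that the bound $g(h)$ there is below $20$ on the interval $J$, so that $\sqrt\lambda\ge 20$ suffices. The paper simply asserts that this bound is convex in $h$ and evaluates it at the two endpoints of $J$; your factorisation $\phi(t)=(t+t^{-1/2})(t+\sqrt{t^2+1})$ and separate handling of the two factors is a minor variant that makes the endpoint control explicit without having to verify convexity of $g$ directly.
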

\subsection{Computational search} \label{subsec:polya/cs}
To determine the satisfiability when $\frac{1}{9}(9-2\sqrt{6})\pi < h \leq \frac{\pi^2}{2}$ and
$$\lambda < 400 \Rightarrow k = N(\lambda) \leq \frac{h}{2}\lambda +  \frac{h}{\pi}\sqrt{\lambda} = 400\frac{h}{2} + 20\frac{h}{\pi} < 1019$$
we first discretize the problem to reduce it to a finite number of cases, and then use a computer-aided search
to check that the conjecture holds in each of the resulting cases.
\subsubsection{$k$-intervals} Consider again the expression for the eigenvalues of $\ch$ given by~\eqref{eigcyl},
and define the $k$-intervals
\begin{align*}
I_{m,n}^k:&=\left\{h\in\mathds{R}^+ : k > \lambda_{m,n}\frac{h}{2}\right\}\\
&=\left\{h\in\mathds{R}^+ : k > m^2\frac{h}{2} + n^2\frac{\pi^2}{2h}\right\}.
\end{align*}
Intuitively, the $k$-interval $I_{m,n}^k$ is the range of values of $h$ for which the eigenvalue $\lambda_{m,n}$ would not satisfy Pólya's conjecture if it were the $k$-th eigenvalue.
It is guaranteed that these sets are indeed intervals since $m^2\frac{h}{2} + n^2\frac{\pi^2}{2h}$ is convex for $h\in(0, \infty)$.

Now we provide a necessary and sufficient condition for P\'{o}lya's conjecture to fail for a particular eigenvalue $\lambda_{k}$.
\begin{lemma}
For any fixed value of $h$, P\'{o}lya's conjecture fails for the $k$-th eigenvalue $\lambda_{k}$
if and only if $h$ is in at least $k$ $k$-intervals. More precisely, $\lambda_k < \frac{2k}{h}$ if and
only if $$h\in I_{m_1,n_1}^k\cap I_{m_2,n_2}^k\cap\dots\cap I_{m_k,n_k}^k$$
where $\lambda_{m_1,n_1}, \lambda_{m_2,n_2}, \dots, \lambda_{m_k,n_k}$ are the first $k$ eigenvalues.
\begin{proof}
    First note that for a fixed $h$, and if $(m,n)$ and $(m',n')$ are such that $\lambda_{m,n} < \lambda_{m',n'}$,
    if $h$ is in $I_{m',n'}^k$, then it must also belong to $I_{m,n}^k$, as
    $$  \lambda_{m,n}\frac{h}{2} < \lambda_{m',n'}\frac{h}{2} < k.$$
    If $k>\lambda_k\frac{h}{2}$, we know that $h$ is in the $k$-interval corresponding to $\lambda_k$. Using the previous observation we deduce that $h$ is also in all the $k$-intervals corresponding to $\lambda_1,\dots,\lambda_{k-1}$, thus, $h$ is in at least $k$ $k$-intervals.
    
    If $h$ is in at least $k$ $k$-intervals then at least one of them corresponds to an eigenvalue $\lambda_l,\; l\geq k$ and we can again use the previous result to conclude that $h$ is in all the $k$-intervals corresponding to any eigenvalue smaller than $\lambda_l$, specifically $\lambda_l > \lambda_k$ and thus $k>\lambda_k\frac{h}{2}$.
\end{proof}
\end{lemma}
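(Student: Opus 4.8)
The plan rests on a single monotonicity observation, which I would isolate first: for a \emph{fixed} $h$, if $(m,n)$ and $(m',n')$ are lattice points with $\lambda_{m,n}\le\lambda_{m',n'}$, then $h\in I_{m',n'}^k$ implies $h\in I_{m,n}^k$. Indeed, $h\in I_{m',n'}^k$ means $\lambda_{m',n'}\frac{h}{2}<k$, whence $\lambda_{m,n}\frac{h}{2}\le\lambda_{m',n'}\frac{h}{2}<k$, i.e.\ $h\in I_{m,n}^k$. This is immediate from the definition of the $k$-intervals and is the only structural input needed; in particular it requires nothing about the convexity of $h\mapsto m^2\frac h2+n^2\frac{\pi^2}{2h}$ beyond the already-noted fact that each $I_{m,n}^k$ is genuinely an interval.

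I would then prove the cycle of implications
\[
\lambda_k<\tfrac{2k}{h}\ \Longrightarrow\ h\in\bigcap_{j=1}^k I_{m_j,n_j}^k\ \Longrightarrow\ \text{$h$ lies in at least $k$ $k$-intervals}\ \Longrightarrow\ \lambda_k<\tfrac{2k}{h},
\]
which delivers both the precise and the qualitative formulations at once. For the first implication, write $\lambda_j=\lambda_{m_j,n_j}$ for a choice of lattice point realising the $j$-th eigenvalue, $j=1,\dots,k$; the hypothesis $\lambda_k<\frac{2k}{h}$ is precisely $h\in I_{m_k,n_k}^k$, and applying the monotonicity step along $\lambda_1\le\cdots\le\lambda_k$ gives $h\in I_{m_j,n_j}^k$ for every $j$. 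The second implication is automatic once one agrees to count $k$-intervals by lattice point: the $k$ lattice points $(m_1,n_1),\dots,(m_k,n_k)$ are pairwise distinct (they are the lattice points chosen to realise $\lambda_1,\dots,\lambda_k$), so they index $k$ distinct $k$-intervals all containing $h$.

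For the last implication, suppose $h\in I_{m_i,n_i}^k$ for $k$ pairwise distinct lattice points $(m_i,n_i)$, $i=1,\dots,k$. These contribute $k$ values to the spectrum counted with multiplicity, so they cannot all lie among $\lambda_1,\dots,\lambda_{k-1}$; hence at least one of them, say $(m_{i_0},n_{i_0})$, realises some $\lambda_l$ with $l\ge k$, and in particular $\lambda_{m_{i_0},n_{i_0}}\ge\lambda_k$. Combined with $h\in I_{m_{i_0},n_{i_0}}^k$, i.e.\ $\lambda_{m_{i_0},n_{i_0}}\frac{h}{2}<k$, this gives $\lambda_k\frac{h}{2}\le\lambda_{m_{i_0},n_{i_0}}\frac{h}{2}<k$, that is $\lambda_k<\frac{2k}{h}$.

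Everything here is a one-line manipulation of the defining inequality $k>\lambda_{m,n}\frac{h}{2}$ once the monotonicity observation is in place, so I do not anticipate a genuine obstacle. The one point I would be careful about is the multiplicity bookkeeping in the final implication: the $k$-intervals must be counted by lattice point rather than by eigenvalue value, and the pigeonhole step — that $k$ distinct lattice points cannot all have eigenvalue strictly below $\lambda_k$ — is what forces one of them to sit at or above $\lambda_k$. That is the only place where a careless write-up could slip.
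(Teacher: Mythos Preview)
Your proof is correct and follows essentially the same approach as the paper's: both isolate the monotonicity observation that $h\in I_{m',n'}^k$ with $\lambda_{m,n}\le\lambda_{m',n'}$ forces $h\in I_{m,n}^k$, then use it for the forward direction and combine it with a pigeonhole step for the converse. Your write-up is slightly more explicit about the multiplicity bookkeeping (counting $k$-intervals by lattice point and noting that $k$ distinct lattice points cannot all realise eigenvalues of index below $k$), which is a welcome clarification of a point the paper handles more tersely.
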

\subsubsection{Search}
$\lambda_{m,n}$ is increasing over $\left|m\right|$ and $n$ so if $\lambda_{m,n}$ is the $k$-th smallest eigenvalue, we have
$$|m|, n \leq k$$
We only need to consider $k\leq 1019$ so it suffices to
\begin{enumerate}[i.]
    \item compute $I^k_{m,n}$ for $|m|, n, k\leq 1\,019$;
    \item determine all intersections of at least $k$ $k$-intervals for each $k=1,2,\dots,1\,019$
\end{enumerate}
to determine the satisfiability of Pólya's conjecture for $\SI$ in the remaining cases.

We can calculate this algebraically as we can determine exact expressions for every $k-$interval's endpoints. Implementing and running this search (details are included in Appendix~\ref{apx:impl}) gives us the intersections
\[
\begin{array}{ll}
 h\in\left(-\sqrt{-4\pi^2+64}+8,\sqrt{-\frac{\pi^2}{4}+4}+2\right) & \mbox{ for } k=8\eqskip
 h\in\left(-\sqrt{-9\pi^2+169}+13,\sqrt{-\pi^2+\frac{169}{16}}+\frac{13}{4}\right) & \mbox{ for } k=13.
\end{array}
\]


Combining all the results above yields a proof of Theorem~\ref{mainthm}.

\begin{figure}[h]
    \begin{subfigure}{0.5\textwidth}
        \centering
        \includegraphics[width=\textwidth]{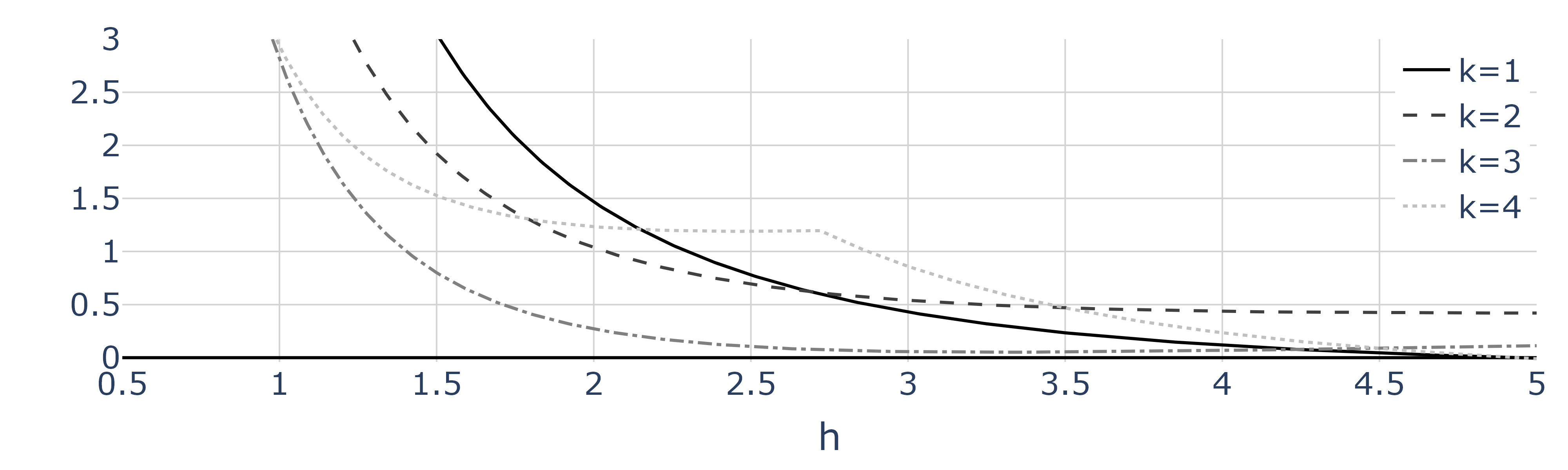}
        \caption{$k\in[1,5)$}
    \end{subfigure}%
    \begin{subfigure}{0.5\textwidth}
        \centering
        \includegraphics[width=\textwidth]{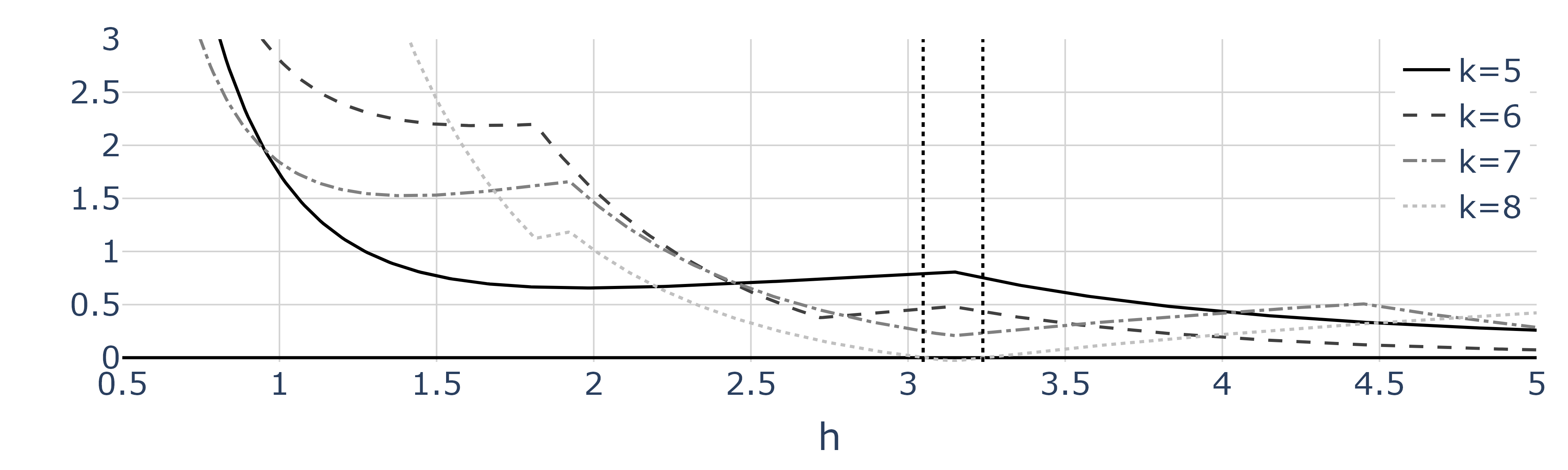}
        \caption{$k\in[5,10)$}
    \end{subfigure}

    \begin{subfigure}{0.5\textwidth}
        \centering
        \includegraphics[width=\textwidth]{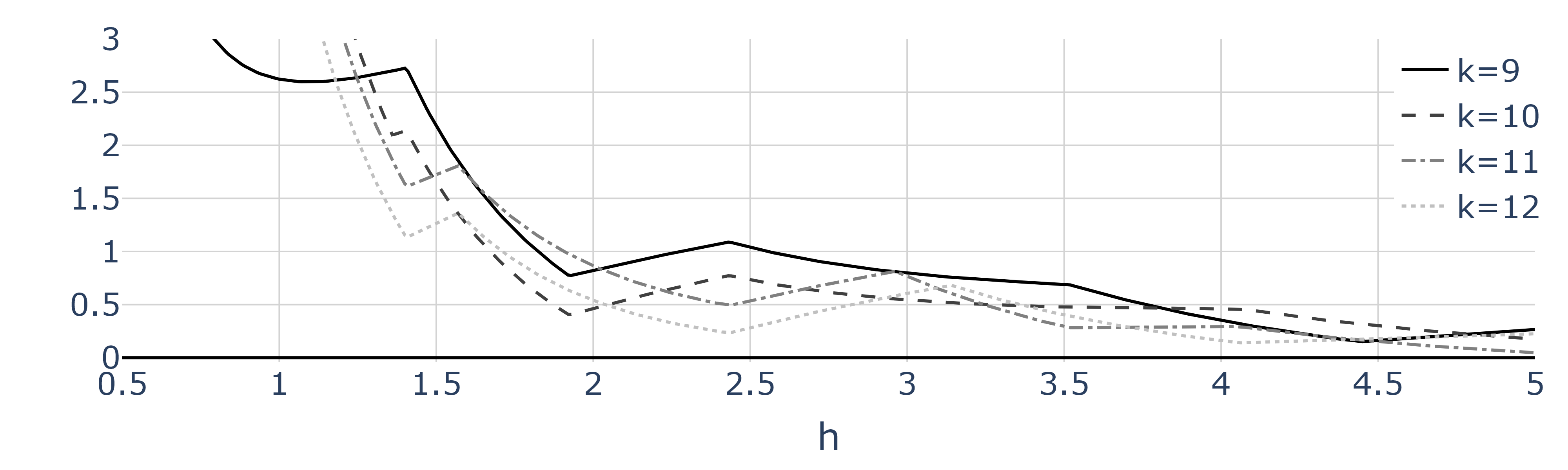}
        \caption{$k\in[10,15)$}
    \end{subfigure}%
    \begin{subfigure}{0.5\textwidth}
        \centering
        \includegraphics[width=\textwidth]{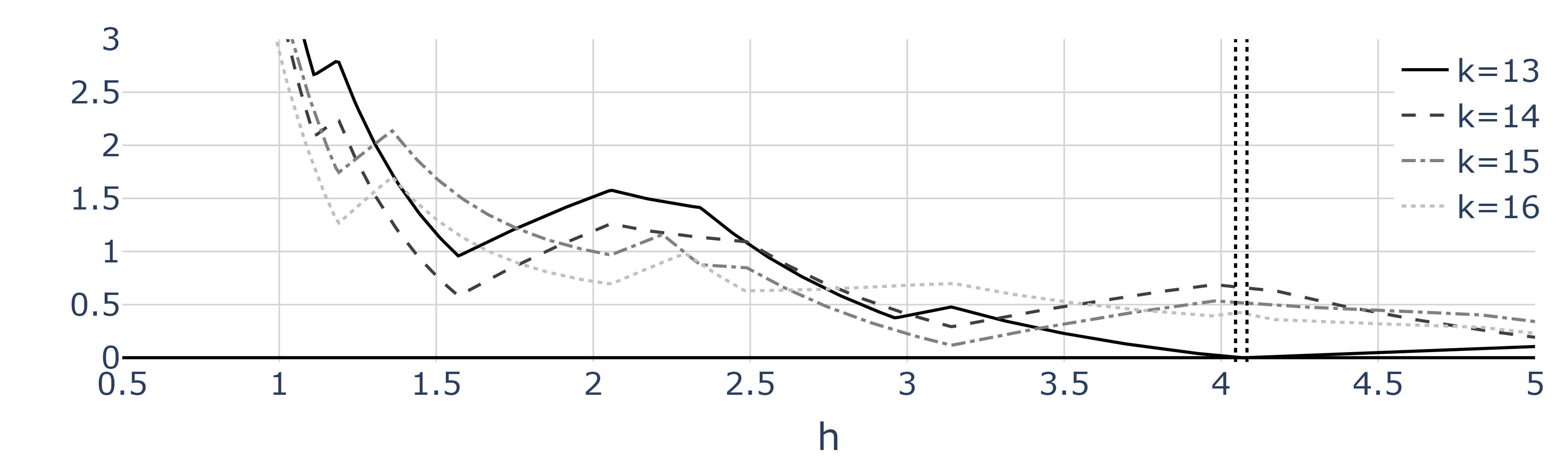}
        \caption{$k\in[10,15)$}
    \end{subfigure}

    \caption{Plots of $(\lambda_k-2k/h)/\sqrt{k}$ for different values of $k$; the two pairs of vertical
    lines denote the intervals where $\lambda_{8}$ and $\lambda_{13}$ fail the conjecture and which are
    shown in Figure~\ref{nesxtfig}.}
\end{figure}

\begin{figure}[h]
    \begin{subfigure}{.5\textwidth}
        \centering
        \includegraphics[width=\textwidth]{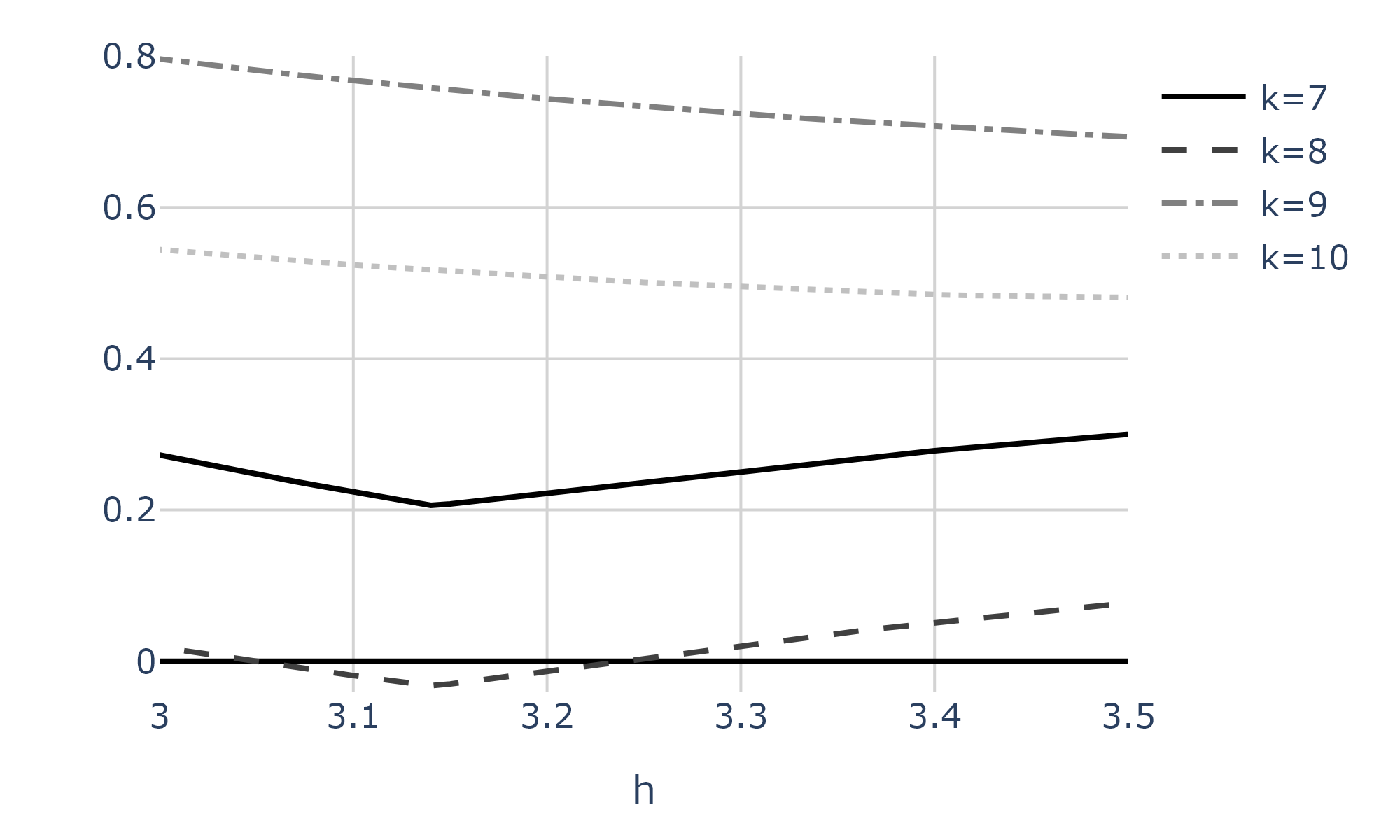}
        \caption{$\lambda_{8}$}
    \end{subfigure}%
    \begin{subfigure}{.5\textwidth}
        \centering
        \includegraphics[width=\textwidth]{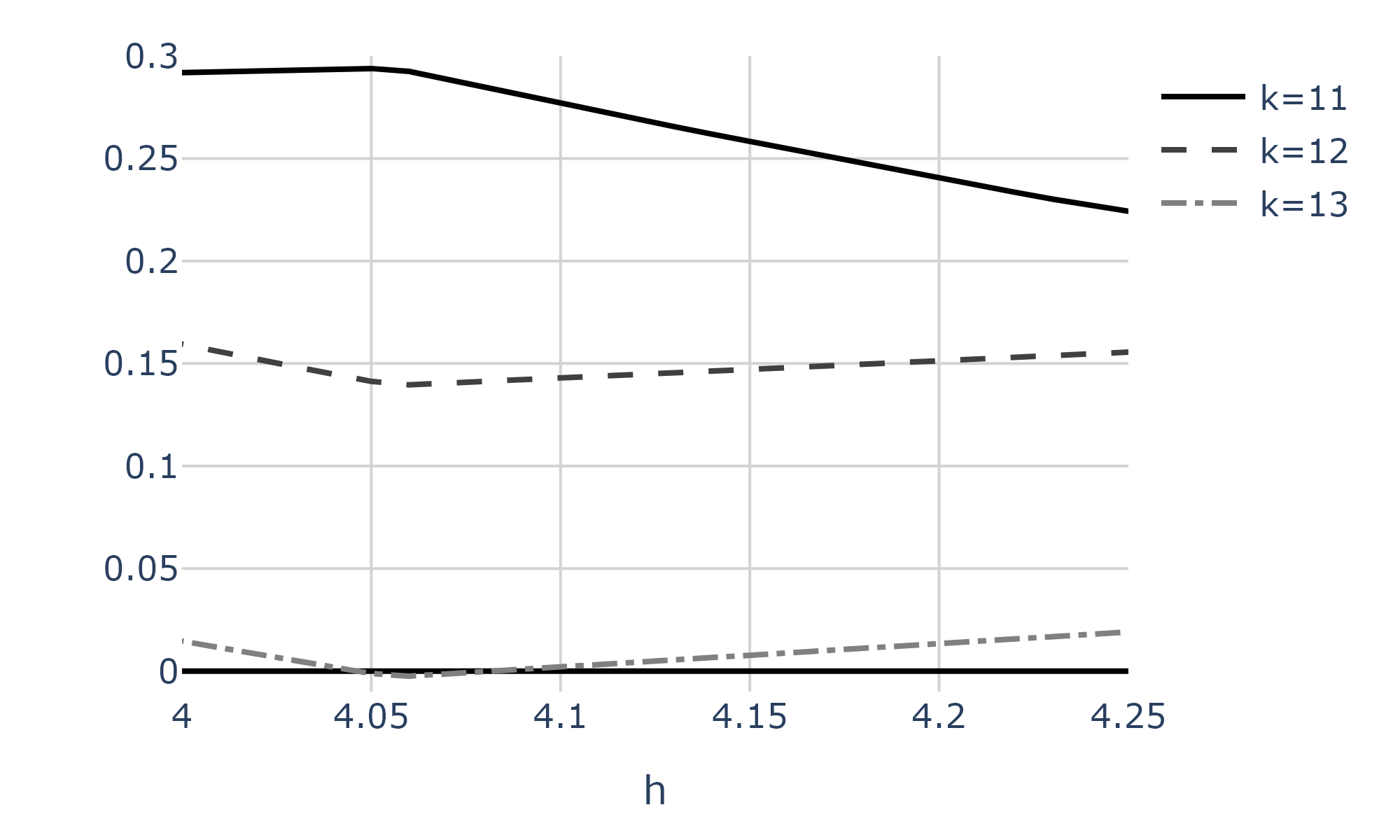}
        \caption{$\lambda_{13}$}
    \end{subfigure}%

    \caption{Plots of $(\lambda_k-2k/h)/\sqrt{k}$ showing the two intervals where Pólya's conjecture is
    not satisfied.\label{nesxtfig}}
\end{figure}

\section{Li-Yau inequalities\label{liyau-sec}}

The Li-Yau inequalities for $\SI$ are given by~\eqref{liyau}.
Proceeding as in the case of the first eigenvalue for P\'{o}lya's inequality, we now see that for $h$ larger than
$\pi^2$ this cannot hold already for $k$ equal to one.
We will, however, show that~\eqref{liyau} holds for all $h\in(0,\pi^2]$. To do this, it is convenient to prove the result
separately on $(0,\pi^2/2]$ and then on $(\pi^2/2,\pi^2]$, and we will start with the former interval. We make use of the following two lemmas.
\begin{lemma}
    \label{lemma:liyau1}
    If the Li-Yau inequality~\eqref{liyau} holds up to $k-1$ and $\lambda_{k}$ satisfies Pólya's conjecture, then
    it also holds for $k$.
\end{lemma}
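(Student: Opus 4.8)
The plan is to argue by a direct summation. Suppose the Li-Yau inequality \eqref{liyau} holds at level $k-1$, i.e.
\[
\sum_{i=1}^{k-1}\lambda_i \;\geq\; \frac{(k-1)^2}{h},
\]
and suppose in addition that $\lambda_k$ satisfies P\'olya's conjecture, i.e. $\lambda_k \geq \frac{2k}{h}$ (which in this setting reads $\lambda_k \geq 2k/h$, matching the formulation $N(\lambda)\leq \lambda h/2$ used in Section~\ref{bounds}). Adding these two estimates gives
\[
\sum_{i=1}^{k}\lambda_i \;=\; \sum_{i=1}^{k-1}\lambda_i + \lambda_k \;\geq\; \frac{(k-1)^2}{h} + \frac{2k}{h} \;=\; \frac{k^2 - 2k + 1 + 2k}{h} \;=\; \frac{k^2+1}{h} \;\geq\; \frac{k^2}{h},
\]
which is exactly \eqref{liyau} at level $k$. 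So the entire argument is the single telescoping-type computation $(k-1)^2 + 2k = k^2+1$.

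The only point requiring care is the bookkeeping of which normalisation of P\'olya's conjecture is in force: throughout Section~\ref{bounds} the conjecture for $\ch$ is written as $\lambda_k \geq 2k/h$, equivalently $N(\lambda)\leq \lambda h/2$, so ``$\lambda_k$ satisfies P\'olya's conjecture'' means precisely $\lambda_k\geq 2k/h$, and this is the inequality to be fed into the sum. I would state this explicitly at the start of the proof to avoid any ambiguity. The base case $k=1$ is included for free, since there the hypothesis ``Li-Yau holds up to $0$'' is the empty statement (the empty sum is $\geq 0 = 0/h$) and P\'olya for $\lambda_1$ gives $\lambda_1 \geq 2/h \geq 1/h$.

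There is essentially no obstacle here: the lemma is a soft consequence of the fact that the Weyl-type lower bound $2k/h$ for the $k$-th eigenvalue exceeds the incremental cost $k^2/h-(k-1)^2/h = (2k-1)/h$ of passing from the $(k-1)$-th to the $k$-th Li-Yau inequality. It is worth noting in the write-up that the slack in the estimate is $1/h$, which is why, in the companion Lemma used afterwards, P\'olya's conjecture can afford to fail for a bounded number of eigenvalues provided enough slack has been accumulated — but that remark belongs to the next lemma, not this one.
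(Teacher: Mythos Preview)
Your proof is correct and is exactly the computation the paper has in mind: the paper's own proof is the single line ``Follows directly by induction,'' and your identity $(k-1)^2+2k=k^2+1\geq k^2$ is that induction step written out.
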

\begin{proof}
    Follows directly by induction.
\end{proof}

Using this lemma, and for the first range of $h$ under consideration, we only need to show that the inequality holds when
P\'{o}lya's conjecture is not satisfied (for $\lambda_8$ and $\lambda_{13}$) and then everything else will follow by
induction. To do this, we note that the following, slightly weaker version of Lemma~\ref{lemma:liyau1}, also holds.
\begin{lemma}
    \label{lemma:liyau2}
    If $\lambda_1,\lambda_2,\dots,\lambda_{k-1}$ all satisfy Pólya's conjecture and
    $$\lambda_{k}\geq\frac{k}{h},$$
    then the Li-Yau inequality holds for $\lambda_1,\lambda_2,\dots,\lambda_{k}$.
    \begin{proof}
        $$\sum_{i=1}^k \lambda_i \geq \frac{2(k-1)k}{h}+\frac{k}{h}=\frac{(2k-1)k}{h}\geq\frac{k^2}{h}$$
    \end{proof}
\end{lemma}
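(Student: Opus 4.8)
The plan is to prove Lemma~\ref{lemma:liyau2} by a direct summation, using that Pólya's conjecture on $\ch$ amounts to the individual bound $\lambda_i \geq 2i/h$ for the $i$-th eigenvalue. Under the hypothesis that $\lambda_1,\dots,\lambda_{k-1}$ all satisfy the conjecture, each of these first $k-1$ eigenvalues obeys $\lambda_i \geq 2i/h$, a bound twice as strong as what the averaged inequality~\eqref{liyau} needs per eigenvalue.

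First I would add up these $k-1$ inequalities, evaluating the arithmetic series $\sum_{i=1}^{k-1} i = (k-1)k/2$, to obtain $\sum_{i=1}^{k-1}\lambda_i \geq (k-1)k/h$. Then I would bring in the remaining hypothesis $\lambda_k \geq k/h$ on the top eigenvalue and add it to the previous estimate, which gives $\sum_{i=1}^{k}\lambda_i \geq (k-1)k/h + k/h = k^2/h$. Dividing through by $k$ yields $\frac1k\sum_{i=1}^{k}\lambda_i \geq k/h$, which is exactly~\eqref{liyau} at index $k$, completing the proof.

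I do not anticipate any real obstacle: the argument is a one-line computation. The only point worth noting is that the estimate closes essentially exactly, producing the lower bound $k^2/h$ with no slack to spare: the surplus accumulated from the strong Pólya bounds on the $k-1$ lower eigenvalues (each contributing $2i/h$ rather than the amount needed on average) is precisely what absorbs the weaker hypothesis $\lambda_k \geq k/h$ imposed at the top index in place of the full Pólya bound $\lambda_k \geq 2k/h$. This lemma, used in tandem with Lemma~\ref{lemma:liyau1}, is what will let one verify the Li--Yau inequalities at the two exceptional indices $k=8$ and $k=13$, where Pólya's conjecture for $\ch$ fails on $0<h\le\pi^2/2$; every other index is then covered by the induction provided by Lemma~\ref{lemma:liyau1}.
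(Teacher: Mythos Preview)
Your proof is correct and follows exactly the same approach as the paper: sum the Pólya bounds $\lambda_i\geq 2i/h$ for $i\leq k-1$, add the hypothesis $\lambda_k\geq k/h$, and conclude $\sum_{i=1}^k\lambda_i\geq k^2/h$. In fact your arithmetic is tidier than the paper's displayed line, which carries a spurious factor of $2$ in the partial sum (writing $2(k-1)k/h$ instead of $(k-1)k/h$) before arriving at the same conclusion.
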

Thus, if the weaker version $\lambda_{k}\geq k/h$ holds for the only eigenvalues for which P\'{o}ly\'{a}'s conjecture
is not satisfied in this range, namely $\lambda_8$ and $\lambda_{13}$, we will have shown that~\eqref{liyau} holds on the first
interval.
It is straightforward to derive the precise values of $\lambda_{8}$ and $\lambda_{13}$ in the non-satisfiability intervals via direct
calculations yielding
\begin{align*}
    \lambda_8 &= \begin{cases}
        1 + 4 \frac{\pi^2}{h^2}\quad &h\in\big(8 - \sqrt{64 - 4 \pi^2}, \pi\big]\eqskip
        4 + \frac{\pi^2}{h^2} \quad &h\in\big[\pi, 2 + \sqrt{4 - \pi^2/4}\big)
    \end{cases}
\end{align*}
and
\begin{align*}
    \lambda_{13} &= \begin{cases}
        1 +  9 \frac{\pi^2}{h^2}\quad &h\in\big(13 - \sqrt{169 - 9 \pi^2}, \sqrt{5/3} \pi\big]\eqskip
        4 +  4 \frac{\pi^2}{h^2}\quad &h\in\big[\sqrt{5/3} \pi, 13/4 + \sqrt{169/16 - \pi^2}\big).
    \end{cases}.
\end{align*}
Since
\begin{align*}
    1 + 4 \frac{\pi^2}{h^2} \geq\frac{8}{h} &\Leftrightarrow (h-4)^2\geq 16-4\pi^2 \,(< 0)\\
    4 + \frac{\pi^2}{h^2} \geq\frac{8}{h} &\Leftrightarrow (h-1)^2\geq 1-\frac{\pi^2}{4} \,(< 0)\\
    1 +  9 \frac{\pi^2}{h^2} \geq\frac{13}{h} &\Leftrightarrow \left(h-\frac{13}{2}\right)^2\geq \frac{169}{4} - 9\pi^2 \,(< 0)\\
    4 +  4 \frac{\pi^2}{h^2} \geq\frac{13}{h} &\Leftrightarrow \left(h-\frac{13}{8}\right)^2\geq \frac{169}{64} - \pi^2 \,(< 0)\\
\end{align*}
we conclude that
$\lambda_8\geq\fr{8}{h}$ and $\lambda_{13}\geq\fr{13}{h}$ for $h\in(0,\pi^2/2]$.
Combining this with the previous two lemmas we obtain the following result.
\begin{proposition}
    The Li-Yau inequalities on $\SI$ are satisfied if $h$ belongs to
    $$\left(0,\frac{\pi^2}{2}\right]$$
\end{proposition}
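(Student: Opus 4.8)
The plan is to run a single induction on $k$, combining Theorem~\ref{mainthm} with the two lemmas just stated. Theorem~\ref{mainthm} tells us that for $h\in(0,\pi^2/2]$ every eigenvalue $\lambda_k(\ch)$ satisfies P\'olya's conjecture, with exactly two exceptions: $\lambda_8$ on $\left(8-\sqrt{64-4\pi^2},2+\sqrt{4-\pi^2/4}\right)$ and $\lambda_{13}$ on $\left(13-\sqrt{169-9\pi^2},13/4+\sqrt{169/16-\pi^2}\right)$, and in each of those intervals the conjecture fails for that single eigenvalue only. So fix $h\in(0,\pi^2/2]$ and suppose~\eqref{liyau} holds up to $k-1$. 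If $\lambda_k$ satisfies P\'olya, Lemma~\ref{lemma:liyau1} gives~\eqref{liyau} at $k$. If it does not, then $k\in\{8,13\}$ and $h$ lies in the corresponding exceptional interval; since the only failure in this range is at index $k$ itself, $\lambda_1,\dots,\lambda_{k-1}$ all satisfy P\'olya, so Lemma~\ref{lemma:liyau2} reduces the inductive step to the weaker bound $\lambda_k\ge k/h$. The base case $k=1$ is immediate, as $\lambda_1=\pi^2/h^2\ge 2/h$ on $(0,\pi^2/2]$.

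It therefore suffices to check $\lambda_8\ge 8/h$ and $\lambda_{13}\ge 13/h$ on the respective exceptional intervals (indeed, one may just as well check them on all of $(0,\pi^2/2]$). For this I would first record the explicit form of these two eigenvalues on each sub-interval, read off from the eigenvalue list~\eqref{eigcyl} together with the ordering analysis of Section~\ref{bounds}: $\lambda_8=1+4\pi^2/h^2$ for $h\le\pi$ and $\lambda_8=4+\pi^2/h^2$ for $h\ge\pi$, while $\lambda_{13}=1+9\pi^2/h^2$ for $h\le\sqrt{5/3}\,\pi$ and $\lambda_{13}=4+4\pi^2/h^2$ for $h\ge\sqrt{5/3}\,\pi$. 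In each of the four resulting cases the required inequality rearranges into a manifestly true statement of the form $(h-a)^2\ge b$ with $b<0$ --- for instance $1+4\pi^2/h^2\ge 8/h\Leftrightarrow(h-4)^2\ge 16-4\pi^2$, and likewise $(h-1)^2\ge 1-\pi^2/4$, $(h-13/2)^2\ge 169/4-9\pi^2$, and $(h-13/8)^2\ge 169/64-\pi^2$ --- which closes the induction and proves the proposition.

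The only step that is not purely formal is the identification of $\lambda_8$ and $\lambda_{13}$ with the specific lattice points used above, that is, verifying that on each sub-interval no other pair $(m,n)\in\Z\times\N$ produces a smaller value; this is exactly what the lattice-point counting of Section~\ref{bounds} and the accompanying computational search already deliver, so no new estimate is needed. Everything else is elementary algebra.
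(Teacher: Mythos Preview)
Your proposal is correct and follows essentially the same approach as the paper: both combine Theorem~\ref{mainthm} with Lemmas~\ref{lemma:liyau1} and~\ref{lemma:liyau2}, then verify $\lambda_8\ge 8/h$ and $\lambda_{13}\ge 13/h$ via the identical piecewise formulas and the identical four quadratic rearrangements $(h-a)^2\ge b$ with $b<0$. The only cosmetic difference is that you phrase the argument as a single explicit induction on $k$, whereas the paper handles the two exceptional indices first and then invokes the lemmas; also, the paper obtains the explicit forms of $\lambda_8$ and $\lambda_{13}$ on the exceptional intervals by a short direct calculation rather than by appeal to the computational search.
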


To prove Li-Yau's satisfiability when $h$ belongs to $(\pi^2/2,\pi^2]$ we need a slightly stronger lemma
\begin{lemma}
    \label{lemma:liyau3}
    If the Li-Yau inequality~\eqref{liyau} holds up to $k-1$ and
    $$\lambda_{k}\geq\frac{2k-1}{h},$$
    then it also holds for $k$.
\end{lemma}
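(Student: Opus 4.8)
The plan is to obtain the inequality for $k$ in a single step from the inductive hypothesis, tracking the sharpened lower bound on $\lambda_k$ just as in the proof of Lemma~\ref{lemma:liyau2}. Observe first that~\eqref{liyau} at index $j$ is equivalent to $\sum_{i=1}^{j}\lambda_i \ge j^2/h$. I would then write $\sum_{i=1}^{k}\lambda_i = \sum_{i=1}^{k-1}\lambda_i + \lambda_k$, bound the first sum from below by $(k-1)^2/h$ using the assumption that~\eqref{liyau} holds up to $k-1$, and bound $\lambda_k$ from below by $(2k-1)/h$ using the remaining hypothesis. This yields $\sum_{i=1}^{k}\lambda_i \ge \bigl[(k-1)^2 + (2k-1)\bigr]/h$.

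The proof then closes on the elementary identity $(k-1)^2 + (2k-1) = k^2$, which turns the last bound into $\sum_{i=1}^{k}\lambda_i \ge k^2/h$, equivalently $\frac1k\sum_{i=1}^{k}\lambda_i \ge k/h$, i.e.~\eqref{liyau} for $k$. There is no genuine obstacle here; the threshold $(2k-1)/h$ in the hypothesis is exactly the gap $k^2 - (k-1)^2$ divided by $h$, so the estimate is tight and the induction propagates. The point of isolating this statement is its downstream use on the interval $(\pi^2/2,\pi^2]$: there it will suffice to verify that the finitely many eigenvalues violating Pólya's conjecture still clear the weaker bar $\lambda_k \ge (2k-1)/h$, whereupon this lemma --- combined with Lemma~\ref{lemma:liyau1} for the eigenvalues that do satisfy Pólya's conjecture --- carries~\eqref{liyau} through all $k$.
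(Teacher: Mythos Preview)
Your proof is correct and is precisely the one-line induction the paper has in mind when it writes ``Follows directly by induction'': split off $\lambda_k$, use $(k-1)^2/h$ from the inductive hypothesis, add $(2k-1)/h$, and collapse via $(k-1)^2+(2k-1)=k^2$. Your closing remark about the downstream strategy is slightly off --- on $(\pi^2/2,\pi^2]$ the paper checks the condition $\lambda_k\ge(2k-1)/h$ directly (not Pólya's inequality) and then handles the finitely many $k$ where \emph{that} fails --- but this does not affect the proof of the lemma itself.
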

\begin{proof}
    Follows directly by induction.
\end{proof}

If we can guarantee Li-Yau's satisfiability where $\lambda_{k}<(2k-1)/h$ then the all other inequalities will follow by induction from this lemma.

From estimative~\eqref{polya-estim2} we obtain the bound
\begin{align*}
    R(\lambda)=N(\lambda)-\lambda\frac{h}{2}&\leq-\frac{\sqrt{h^2+\pi^2}-h}{\pi}\sqrt{\lambda}+\left(\frac{h}{\pi}+\sqrt{\frac{\pi}{h}}\right)\left(\fr{2}{3}\right)^{3/2}\lambda^{1/4}\\
    \Leftrightarrow k&\leq \lambda\frac{h}{2}-\frac{\sqrt{h^2+\pi^2}-h}{\pi}\sqrt{\lambda}+\left(\frac{h}{\pi}+\sqrt{\frac{\pi}{h}}\right)\left(\fr{2}{3}\right)^{3/2}\lambda^{1/4}
\end{align*}
Now if
$$-\frac{\sqrt{h^2+\pi^2}-h}{\pi}\sqrt{\lambda}+\left(\frac{h}{\pi}+\sqrt{\frac{\pi}{h}}\right)\left(\fr{2}{3}\right)^{3/2}\lambda^{1/4}\leq\frac{1}{2},$$
then the lemma's condition is satisfied as
$$k\leq \lambda\frac{h}{2}+\frac{1}{2}\Leftrightarrow \lambda\geq\frac{2k-1}{h}.$$
It is thus enough to check that Li-Yau's inequalities hold for all values of $\lambda$ such that
\begin{align*}
    \frac{1}{2}&<-\frac{\sqrt{h^2+\pi^2}-h}{\pi}\sqrt{\lambda}+\left(\frac{h}{\pi}+\sqrt{\frac{\pi}{h}}\right)\left(\fr{2}{3}\right)^{3/2}\lambda^{1/4}\\
    \Rightarrow \frac{1}{2}&<-\frac{\sqrt{\pi^4+\pi^2}-\pi^2}{\pi}\sqrt{\lambda}+\left(\frac{\pi^2}{\pi}+\sqrt{\frac{\pi}{\pi^2}}\right)\left(\fr{2}{3}\right)^{3/2}\lambda^{1/4}
\end{align*}
since the right--hand side is increasing in $h$ on the interval $(\pi^2/2,\pi^2]$. This is then equivalent to
\[
\frac{1}{2}<-(\liyauA)\sqrt{\lambda}+\liyauB\lambda^{1/4},
\]
which may be solved to yield
\begin{gather*}
    \fr{2\pi^{3/2}+2-\sqrt {-27\pi\sqrt {1 + \pi^2} + 8\pi^{3/2} + 4\pi^3 + 27\pi^2 + 4}} {3\sqrt {6\pi}\left (\sqrt {1 + \pi^2} - \pi  \right)}
    < \lambda^{1/4} < \fr{2\pi^{3/2} + 2+\sqrt {-27\pi\sqrt {1 + \pi^2} + 8\pi^{3/2} + 4\pi^3 + 27\pi^2 + 4}} {3\sqrt {6\pi}\left (\sqrt {1 + \pi^2} - \pi  \right)}.
\end{gather*}
The lower and upper ends of the above interval correspond to $\lambda\approx 0.004$ and
$\lambda\approx 26\,300.6$, respectively. We now determine which eigenvalues in this range do not satisfy the
condition in Lemma~\ref{lemma:liyau3}, namely,
$$\lambda\geq\frac{2k-1}{h}\Leftrightarrow k-\fr{1}{2}\leq\lambda\frac{h}{2}.$$
As this condition is of the same type as that for Pólya's conjecture, we can use the same algorithm as
in Section~\ref{bounds} to computationally check these remaining cases, with the modified $k$-intervals

$$\tilde{I}_{m,n}^k:=\left\{h\in\mathds{R}^+ : k-\fr{1}{2} > m^2\frac{h}{2} + n^2\frac{\pi^2}{2h}\right\}.$$

Since
$$k\leq\frac{h}{2}\lambda+\frac{h}{\pi}\sqrt{\lambda}<130\,298$$
we only need to consider the first $130\,297$ eigenvalues. To compute this we need the following improvements to the previous algorithm:

\begin{enumerate}[i.]
    \item Calculating the $k$-intervals, we only need to consider eigenvalues $\lambda=m^2+n^2(\pi^2/h^2)$ which can be smaller than $26\,300.6$ in the range $h\in(\pi^2/2,\pi^2]$ since we are only interested in intersecting $k$-intervals if $\lambda_k\lesssim26\,300.65 \Rightarrow \lambda_1,\dots,\lambda_k\lesssim26\,300.6$. This means we can skip values of $m,n$ when
    $$m^2+n^2\frac{1}{\pi^2}>26\,301(>26\,300.6)$$
    \item Since $\lambda=m^2+n^2(\pi^2/h^2)$ and $\lambda\lesssim26\,300.6$, we only ever need to consider values for $m, n$ in the ranges
    \begin{align*}
    |m|&\leq\sqrt{\lambda}<163\\
    n&\leq(h/\pi)\sqrt{\lambda}\leq\pi\sqrt{\lambda}<510
    \end{align*}
\end{enumerate}

Running this version of the algorithm (details are given in Appedix~\ref{apx:impl2}) we obtain that the only eigenvalue orders which do not satisfy the condition in Lemma~\ref{lemma:liyau1} in the range
$h\in(\pi^2/2,\pi^2], \lambda < 26\, 301$ are $7,10,77$, and $86$.
This means that we only need to check that the Li-Yau's inequalities hold up to $k=86$. For any fixed value of $h$ we can easily compute the exact values of $\lambda_1,\dots,\lambda_{86}$. In fact, for a range of values $h\in[h_0,h_1]$ if there are no eigenvalue ``crossovers'' of the form

$$\lambda_i = m_i^2 + n_i^2\frac{\pi^2}{h^2}=m_j^2 + n_j^2\frac{\pi^2}{h^2} = \lambda_j$$
for some $h\in(h_0,h_1)$ with $i,j\leq86$ and $(|m_1|,n_1)\neq(|m_2|,n_2)$ then we can determine the exact expressions of $\lambda_1,\dots,\lambda_{86}$ as functions of $h$ since the order of these expressions never changes; to do this we can choose the value $h_M=(h_0+h_1)/2$, consider each expression $m^2 + n^2\frac{\pi^2}{h^2}$ evaluated at $h=h_M$ and order them based on their exact values.

To partition $(\pi^2/2, \pi^2]$ into intervals without ``crossovers'' we may compute all the intersections between eigenvalues as follows: for each $(|m_1|,n_1)\neq(|m_2|,n_2)$ such that $N(\lambda_{m_1,n_1}),N(\lambda_{m_2,n_2})\leq 86$ we add the point
\[
\begin{array}{lll}
    \lambda_{m_1,n_1} = \lambda_{m_2,n_2} & \Leftrightarrow & m_1^2 + n_1^2\fr{\pi^2}{h^2}=m_2^2 + n_2^2\frac{\pi^2}{h^2}\eqskip
    & \Leftrightarrow & h = \pi\sqrt{\frac{n_2^2-n_1^2}{m_1^2-m_2^2}}
\end{array}
\]
to a list and after we sort it to form the partition.

On each interval $[h_0,h_1]$ of the partition we can now compute the exact expressions of the sums $\lambda_1,\lambda_1+\lambda_2,\dots,\sum_{i=1}^{86}\lambda_i$ which will be of the form

$$\sum_{i=1}^{k}\lambda_i = \sum_{i=1}^{k}m_i^2+n_i^2\frac{\pi^2}{h^2}=a_k+b_k\frac{1}{h^2}$$
for some constants $a_k, b_k$. Finally, we determine for which values of $h$ the Li-Yau inequality holds
\[
\begin{array}{lll}
    \dsum_{i=1}^{k}\lambda_i=a_k+b_k\frac{1}{h^2}\geq\frac{k^2}{h} & \Leftrightarrow & a_k h^2-k^2 h + b_k\geq0 \eqskip
    & \Leftrightarrow & h \in
\begin{cases}
    \left(-\infty, \frac{b_k}{k^2}\right] &\text{if $a_k=0$}\eqskip
    \left(-\infty, \frac{k^2-\sqrt{k^4-4a_kb_k}}{2a_k}\right] \cup \left[\frac{k^2+\sqrt{k^4-4a_kb_k}}{2a_k}, \infty\right) &\text{if $a_k\neq0$}\eqskip
\end{cases}
\end{array}
\]
and check if this region covers all the interval $[h_0, h_1]$.

The algorithm described in Appendix \ref{apx:impl3} implements all the verification steps and running it confirms that indeed the Li-Yau inequalities are satisfied up to $k=86$ when $h\in(\pi^2/2,\pi^2]$ and $\lambda < 26\,301$. Since the condition in Lemma~\ref{lemma:liyau3} holds for all $k>86$ when $h\in(\pi^2/2,\pi^2]$ we conclude that Li-Yau inequalities are satisfied for any $k$ when $h\in(\pi^2/2,\pi^2]$ as intended, thus finishing
the proof of Corollary~\ref{cor-liyau}.

\section{Remarks for other domains\label{other}}

\subsection{Domains on the cylindrical surface $\mathbb{S}^{1}\times \R$}

If a domain $\Omega\subset \mathbb{S}^{1}\times \R$ contains a cylindrical strip $\ch$ for some value of $h$,
we then have
$
 \lambda_{k}\left(\Omega\right) \leq \lambda_{k}\left(\ch\right)
$
and, in particular, $\lambda_{1}\left(\Omega\right) \leq \lambda_{1}\left(\ch\right)=\pi^2/h^2$. Hence, if $h$
is such that
\[
\fr{\pi^2}{h^2} \leq \fr{4\pi}{\left|\Omega\right|},
\]
it follows that $\Omega$ does not satisfy P\'{o}lya's conjecture. On the other hand, one class of domains for
which the conjecture will be satisfied will be those tiling a strip $\ch$ which itself satisfies the conjecture
-- see~\cite{fms} for the corresponding argument, for instance.

We shall now specifically consider the geodesic disk $D_{R}$ of radius $R$ defined on the cylindrical surface $\mathbb{S}^1\times \R$, for which it is known that it satisfies P\'{o}lya's conjecture for $R$ up to $
\pi$~\cite{flps}, and provide an upper bound for the value of $R$ where this stops being the case. Note that
since the area of the domain $\Omega$ in the expression above also depends on $h$, to apply this type
of argument it is necessary to solve an equation in $h$. However, the resulting value of $R$ is approximately $6.85504$ and we will use another method to derive a better estimate.
\begin{thm}
There exists a value of $R$, say $R_{1}$, such that the geodesic disk $D_{R}$ on the cylinder $\mathbb{S}^{1}\times \R$
does not satisfy P\'{o}lya's conjecture for $R>R_{1}$. The constant $R_{1}$ may be taken to be $3.76085$.
\end{thm}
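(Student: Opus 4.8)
The plan is to produce a single explicit test function on $D_R$ whose Rayleigh quotient already violates P\'{o}lya's inequality for the first eigenvalue. For $R\le\pi$ the disk is isometric to a Euclidean disk of radius $\le\pi$, so the conjecture holds by~\cite{flps}; thus only $R>\pi$ is at stake. In geodesic normal coordinates $(\phi,t)\in(-\pi,\pi]\times\R$ centred at the centre of $D_R$ we have $D_R=\{(\phi,t):\phi^2+t^2\le R^2\}$ and
\[
|D_R|=\int_{-\pi}^{\pi}2\sqrt{R^2-\phi^2}\dx\phi=2\left(\pi\sqrt{R^2-\pi^2}+R^2\arcsin\frac{\pi}{R}\right).
\]
Here $D_R$ is the strip $C_h$ with $h=2\sqrt{R^2-\pi^2}$ with two caps attached along $|t|=\sqrt{R^2-\pi^2}$. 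The crude estimate $\lambda_1(D_R)\le\lambda_1(C_h)=\pi^2/(4(R^2-\pi^2))$ only uses a sub-strip whose area is a fixed proportion of $|D_R|$, which is exactly why it yields the larger value recorded above; to do better one needs a test function living on all of $D_R$ that reflects the elongation of the domain in the $t$-direction near $\phi=0$.

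The natural candidate is the lowest vertical Dirichlet mode on each slice,
\[
u(\phi,t)=\cos\!\left(\frac{\pi t}{2\sqrt{R^2-\phi^2}}\right).
\]
It vanishes on $\partial D_R=\{\phi^2+t^2=R^2\}$, is $2\pi$-periodic in $\phi$, and lies in $H_0^1(D_R)$ because $R^2-\phi^2\ge R^2-\pi^2>0$ on $D_R$ keeps $\partial_\phi u$ bounded. Integrating first in $t$ over each slice $\{\phi\}\times(-c,c)$, $c:=\sqrt{R^2-\phi^2}$ -- all these integrals are elementary, the only non-obvious one being $\int_{-c}^{c}t^2\sin^2(\pi t/2c)\dx t=c^3(\tfrac13+\tfrac2{\pi^2})$ -- and then in $\phi$, where one meets only $\int_{-\pi}^{\pi}(R^2-\phi^2)^{-1/2}\dx\phi=2\arcsin(\pi/R)$ and $\int_{-\pi}^{\pi}\phi^2(R^2-\phi^2)^{-3/2}\dx\phi=2\big(\pi/\sqrt{R^2-\pi^2}-\arcsin(\pi/R)\big)$, one obtains
\begin{gather*}
\int_{D_R}|\nabla u|^2=\left(\frac{\pi^2}{3}-1\right)\arcsin\frac{\pi}{R}+\left(\frac{\pi^2}{6}+1\right)\frac{\pi}{\sqrt{R^2-\pi^2}},\\
\int_{D_R}u^2=\pi\sqrt{R^2-\pi^2}+R^2\arcsin\frac{\pi}{R}=\tfrac12|D_R|.
\end{gather*}
Hence $\lambda_1(D_R)\le\frac{2\int_{D_R}|\nabla u|^2}{|D_R|}$, so P\'{o}lya's conjecture fails for $D_R$ at $\lambda_1$ whenever $\int_{D_R}|\nabla u|^2<2\pi$, that is, whenever
\[
\left(\frac{\pi^2}{3}-1\right)\arcsin\frac{\pi}{R}+\left(\frac{\pi^2}{6}+1\right)\frac{\pi}{\sqrt{R^2-\pi^2}}<2\pi.
\]

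The left-hand side is strictly decreasing in $R$ on $(\pi,\infty)$ (both summands decrease and their coefficients are positive), it tends to $+\infty$ as $R\to\pi^{+}$ and to $0$ as $R\to\infty$, so it lies below $2\pi$ exactly for $R$ past its unique root, which one computes to be $R_1=3.76085$; this gives both the stated constant and, by the monotonicity just noted, the conclusion for every $R>R_1$. The main obstacle is the one dealt with at the outset: every inscribed sub-domain we can control -- a cylindrical strip, a Euclidean disk -- satisfies P\'{o}lya itself and occupies only a definite fraction of $|D_R|$, so no bound drawn from such a piece can reach the target, and one is forced to a genuinely global test function adapted to the shape of $D_R$; once that step is taken, the rest is the elementary integral computation above together with the monotonicity of the displayed inequality.
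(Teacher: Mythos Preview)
Your proof is correct and follows essentially the same approach as the paper: the identical test function $u=\cos\big(\pi t/(2\sqrt{R^2-\phi^2})\big)$, the same Rayleigh-quotient computation (your $\arcsin(\pi/R)$ is the paper's $\arctan(\pi/\sqrt{R^2-\pi^2})$), and the same monotonicity argument leading to the same numerical root $R_1$. Your observation that $\int_{D_R}u^2=\tfrac12|D_R|$, reducing the failure criterion to the clean inequality $\int_{D_R}|\nabla u|^2<2\pi$, is a nice presentational streamlining that the paper does not make explicit.
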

\begin{proof} We use the function
\[
 u(x,y) = \cos\left (\fr {\pi   y} {2\sqrt {R^2 - x^2}} \right)
\]
as a test function for the corresponding Rayleigh quotient to obtain an upper bound for the first eigenvalue on $D_{R}$.
For $R$ larger than or equal to $\pi$, we have
\[
\begin{array}{lll}
 \dint_{D_{R}} u^{2}(x,y)\, {\rm d}y{\rm d}x & = & \pi\sqrt {R^2 - \pi^2} +
   R^2\arctan\left (\fr{\pi} {\sqrt {R^2 - \pi^2}} \right)\eqskip
\end{array}
\]
while
\[
\begin{array}{lll}
 \dint_{D_{R}} \left|\nabla u\right|^{2}(x,y)\, {\rm d}y{\rm d}x & = &
 \fr {1} {6}\left[\fr {\pi\left (6 + \pi^2 \right)} {\sqrt {R^2 - \pi^2}} + 2\left (\pi^2 - 3 \right)
      \arctan\left (\fr {\pi} {\sqrt {R^2 - \pi^2}} \right) \right].
\end{array}
\]
We thus obtain
\[
\begin{array}{lll}
 \lambda_{1}\left(D_{R}\right) & \leq &
 \fr{\dint_{D} \left|\nabla u\right|^{2}(x,y)\, {\rm d}y{\rm d}x}{\dint_{D} u^{2}(x,y)\, {\rm d}y{\rm d}x }\eqskip
 & = & \fr{\fr {\pi\left (6 + \pi^2 \right)} {\sqrt {R^2 -
 \pi^2}} + 2\left (\pi^2 - 3 \right)\arctan\left (\fr {\pi} {\sqrt {R^2 - \pi^2}} \right)} {6\left[\pi\sqrt {R^2 -
 \pi^2} + R^2\arctan\left (\fr{\pi} {\sqrt {R^2 - \pi^2}} \right) \right]}.
\end{array}
\]
Combining this expression with $4\pi/\left|D_{R}\right|$ we obtain that for values of $R$ larger
than $\pi$ for which the quantity on the right-hand side of the inequality
\[
\begin{array}{lll}
 \fr{4\pi}{\left|D_{R}\right|}-\lambda_{1}\left(D_{R}\right) & \geq &
 \fr{\pi\left (12\sqrt {R^2 - \pi^2} - \pi^2 - 6 \right) - 2\left (\pi^2 - 3 \right)\sqrt {R^2 - \pi^2}
 \arctan\left (\fr {\pi} {\sqrt {R^2 - \pi^2}} \right)} {6\left[ \pi (R^2 -\pi^2)  + \sqrt {R^2 - \pi^2}  R^2
 \arctan\left (\fr {\pi} {\sqrt {R^2 - \pi^2}}  \right) \right]}
\end{array}
\]
is positive, the first eigenvalue of $D_{R}$ does not satisfy P\'{o}lya's conjecture. It is readily seen that
the denominator above is positive for $R$ larger than $\pi$, and so the sign of the whole expression is given
by the sign of the numerator. This may, in turn, be written in the form
\[
 c_{1} w -c_{2} - c_{3}w\arctan\left({\fr{\pi}{w}}\right),
\]
where $w=\sqrt{R^2-\pi^2}$ and $c_{i}$, $i=1,2,3$ are positive constants. It is not difficult to see that (for
these values of the constants $c_{i}$) this is an increasing function of $w$, which is negative when $w$ vanishes
and goes to plus infinity with $w$. The given value for $R_{1}$ may be obtained by solving (numerically) the equation
resulting from equating the numerator above to zero.
\end{proof}

\subsection{Higher dimensions: some estimates for general cylinders\label{higher}} Although similar studies may be
carried out for more general cylinders such as those of the form $\mathbb{S}^{n}\times I_{h}$ and
$\mathbb{S}^{1}\times {\ds\prod_{j=1}^{n}} I_{h_{j}}$, the corresponding analysis becomes much more involved and we
will thus focus on two aspects which we believe already provide a global illustration of what is to be expected. On the
one hand, it is relatively straightforward to consider again the first eigenvalue by itself and see when P\'{o}lya's
conjecture fails in this case. We expect that in the former case, namely $\mathbb{S}^{n}\times I_{h}$, the values of
$h$ where P\'{o}lya's conjecture breaks down for $\lambda_{1}$ will decrease with $n$, while in the latter case
increasing the dimension will allow for higher values of the indices $h_{j}$ for which P\'{o}lya's conjecture holds.

\subsubsection{$\mathbb{S}^{n}\times I_{h}$}

In this instance, Pólya's conjecture reads as
\begin{align*}
    N(\lambda)&\leq \lambda^{\frac{n+1}{2}}\frac{\omega_{n+1}}{(2\pi)^{n+1}}|\mathbb{S}^{n}\times I_{h}|\\
    &= \lambda^{\frac{n+1}{2}}\frac{(n+1)\omega_{n+1}^2}{(2\pi)^{n+1}} h\\
    &= \lambda^{\frac{n+1}{2}}\frac{n+1}{2^{n+1}\Gamma\left(\frac{n+3}{2}\right)^2} h,
\end{align*}
while the first Dirichlet eigenvalue is again given by
$$\lambda_1(\mathbb{S}^{n}\times I_{h})=\frac{\pi^2}{h^2}.$$
The conjecture fails for this eigenvalue when
\begin{align*}
    1&>\left(\frac{\pi^2}{h^2}\right)^{\frac{n+1}{2}}\frac{n+1}{2^{n+1}\Gamma\left(\frac{n+3}{2}\right)^2} h\\
    \Leftrightarrow h &>\pi^{1+\frac{1}{n}}\frac{(n+1)^{\frac{1}{n}}}{2^{1+\frac{1}{n}}\Gamma\left(\frac{n+3}{2}\right)^{\frac{2}{n}}}\\
    &=\frac{\pi}{2}\left(\frac{\pi(n+1)}{2 \Gamma\left(\frac{n+3}{2}\right)^2}\right)^{\frac{1}{n}},
\end{align*}
and we conclude
\begin{proposition}
Pólya's conjecture on $\mathbb{S}^{n}\times I_{h}$ does not hold for $h$ larger than
$$\eta_{1}(n)=\fr{\pi}{2}\left(\frac{\pi(n+1)}{2 \Gamma\left(\frac{n+3}{2}\right)^2}\right)^{\frac{1}{n}}\approx \fr{e \pi}{n}+
\bo\left(\fr{\log(n)}{n^2}\right), \mbox{ as } n\to\infty.$$
\end{proposition}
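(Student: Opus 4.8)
The chain of equivalences displayed immediately before the statement already yields the first assertion: since $\lambda_1(\mathbb{S}^n\times I_h)=\pi^2/h^2$, the failure of Pólya's counting bound at $k=1$, namely $1>\bigl(\pi^2/h^2\bigr)^{(n+1)/2}\tfrac{n+1}{2^{n+1}\Gamma((n+3)/2)^2}\,h$, is equivalent to $h>\eta_1(n)$. Hence for every $h>\eta_1(n)$ Pólya's conjecture on $\mathbb{S}^n\times I_h$ fails already for $\lambda_1$. It therefore only remains to establish the asymptotic expansion of $\eta_1(n)$ as $n\to\infty$.

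For the expansion the plan is to take logarithms and apply Stirling's formula. Writing
\[
\log\eta_1(n)=\log\frac{\pi}{2}+\frac1n\Bigl[\log\pi+\log(n+1)-2\log\Gamma\!\Bigl(\tfrac{n+3}{2}\Bigr)\Bigr],
\]
I would substitute $2\log\Gamma\!\bigl(\tfrac{n+3}{2}\bigr)=(n+1)\log\tfrac{n+1}{2e}+\log\bigl(\pi(n+1)\bigr)+\bo(1/n)$, obtained from $\log\Gamma(z+1)=z\log z-z+\tfrac12\log(2\pi z)+\bo(1/z)$ with $z=\tfrac{n+1}{2}$. The $\log\pi$ and $\log(n+1)$ terms cancel, leaving $\tfrac1n\bigl[-(n+1)\log\tfrac{n+1}{2e}+\bo(1/n)\bigr]$. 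Using $\tfrac{n+1}{n}=1+\tfrac1n$ and $\log\tfrac{n+1}{2e}=\log n-1-\log 2+\bo(1/n)$ and collecting terms, I expect to arrive at
\[
\log\eta_1(n)=\log\frac{e\pi}{n}-\frac{\log n}{n}+\bo\!\Bigl(\frac1n\Bigr),
\]
so that exponentiating gives $\eta_1(n)=\tfrac{e\pi}{n}\bigl(1-\tfrac{\log n}{n}+\bo(1/n)\bigr)=\tfrac{e\pi}{n}+\bo\!\bigl(\tfrac{\log n}{n^2}\bigr)$, which is the claim.

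The only step requiring genuine care -- and the place where a careless estimate would produce the wrong error order -- is the bookkeeping in the last line. The correction $-\tfrac{\log n}{n}$ inside the bracket is of order $\tfrac{\log n}{n}$, hence larger than the $\bo(1/n)$ terms; but once multiplied by the leading factor $\tfrac{e\pi}{n}$ it contributes only $\bo\!\bigl(\tfrac{\log n}{n^2}\bigr)$, while the quadratic remainder from exponentiating, of size $\bo\!\bigl(\tfrac{(\log n)^2}{n^2}\bigr)$, enters at order $\bo\!\bigl(\tfrac{(\log n)^2}{n^3}\bigr)=o\!\bigl(\tfrac{\log n}{n^2}\bigr)$ after the same multiplication, so it is absorbed; one should also confirm that the $\bo(1/n)$ error inherited from Stirling, once divided by $n$, contributes only $\bo(1/n^2)$. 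Maintaining a consistent ledger of the orders $\tfrac{\log n}{n}$, $\tfrac1n$, $\tfrac{\log n}{n^2}$, $\tfrac1{n^2}$ throughout the computation is the entire substance of the argument; there is no conceptual obstacle beyond this routine asymptotic analysis.
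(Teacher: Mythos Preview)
Your proposal is correct and follows the same route as the paper: the equivalence $h>\eta_1(n)\Leftrightarrow$ failure at $k=1$ is exactly what the paper derives in the lines preceding the proposition, and the paper does not spell out the asymptotic expansion at all, so your Stirling computation is in fact more detailed than what the paper provides. One small slip: in your first displayed expression for $\log\eta_1(n)$ you dropped the $-\log 2$ coming from the denominator $2\,\Gamma(\tfrac{n+3}{2})^2$, but this only contributes an extra $-\tfrac{\log 2}{n}=\bo(1/n)$ and is absorbed in your stated error, so the conclusion is unaffected.
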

We thus see that, as $n$ increases, the value of $h$ for which Pólya's conjecture is satisfied will decrease. We
remark that, just like in the case where $n$ is one, there might still exist gaps in the interval $(0,\eta_{1}(n))$
where Pólya's conjecture is not satisfied, similar to those corresponding to $\lambda_8$ and $\lambda_{13}$
for $\SI$.

\subsubsection{$\mathbb{S}^{1}\times {\ds\prod_{j=1}^{n}} I_{h_{j}}$}

P\'{o}lya's conjecture 
now reads as
\begin{align*}
    N(\lambda)&\leq \lambda^{\frac{n+1}{2}}\frac{\omega_{n+1}}{(2\pi)^{n+1}}\left|\mathbb{S}^{1}\times {\ds\prod_{j=1}^{n}} I_{h_{j}}\right|\\
    &= \lambda^{\frac{n+1}{2}}\frac{h_1h_2\cdots h_n}{(2\pi)^n} \omega_{n+1}\\
    &= \lambda^{\frac{n+1}{2}}\frac{h_1h_2\cdots h_n}{2^n\pi^{\frac{n-1}{2}}\Gamma\left(\frac{n+3}{2}\right)}
\end{align*}
We shall again consider the first Dirichlet eigenvalue, now given by
$$\lambda_1\left(\mathbb{S}^{1}\times {\ds\prod_{j=1}^{n}} I_{h_{j}}\right)=\frac{\pi^2}{h_1^2}+\frac{\pi^2}{h_2^2}+\dots+\frac{\pi^2}{h_n^2},$$
and see that the conjecture fails for this eigenvalue when
\begin{align*}
    1&>\left(\frac{\pi^2}{h_1^2}+\frac{\pi^2}{h_2^2}+\dots+\frac{\pi^2}{h_n^2}\right)^{\frac{n+1}{2}}\frac{h_1h_2\cdots h_n}{2^n\pi^{\frac{n-1}{2}}\Gamma\left(\frac{n+3}{2}\right)}\\
    &=\left(h_1^{-2}+h_2^{-2}+\dots+h_n^{-2}\right)^{\frac{n+1}{2}}h_1h_2\cdots h_n\frac{\pi^{\frac{n+3}{2}}}{2^n\Gamma\left(\frac{n+3}{2}\right)}\\
    \Leftrightarrow \frac{\pi^{1/2}}{2}\left(\frac{2\pi}{\Gamma\left(\frac{n+3}{2}\right)}\right)^{\frac{1}{n+1}} &<\left(h_1^{-2}+h_2^{-2}+\dots+h_n^{-2}\right)^{-1/2}(h_1h_2\cdots h_n)^{-\frac{1}{n+1}}.
\end{align*}
We thus have
\begin{proposition}
    Pólya's conjecture on $\mathbb{S}^{1}\times {\ds\prod_{j=1}^{n}} I_{h_{j}}$ may be safisfied only if
    $$\frac{\pi^{1/2}}{2}\left(\frac{2\pi}{\Gamma\left(\frac{n+3}{2}\right)}\right)^{\frac{1}{n+1}} \geq\left(h_1^{-2}+h_2^{-2}+\dots+h_n^{-2}\right)^{-1/2}(h_1h_2\cdots h_n)^{-\frac{1}{n+1}}$$
\end{proposition}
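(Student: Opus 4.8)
The plan is to obtain the stated inequality as a \emph{necessary} condition coming purely from the behaviour of the first eigenvalue, exactly in the spirit of Proposition~\ref{noweaker} and of the $\mathbb{S}^{n}\times I_{h}$ discussion just above. Since $\mathbb{S}^{1}\times\prod_{j=1}^{n}I_{h_{j}}$ is a product domain — a circle with periodic conditions times $n$ intervals with Dirichlet conditions — its Dirichlet spectrum is the set of numbers $m^{2}+\sum_{j=1}^{n}k_{j}^{2}\pi^{2}/h_{j}^{2}$ with $m\in\Z$ and $k_{j}\in\N$, and the smallest of these is attained by taking $m=0$ (the constant mode on the circle) together with $k_{j}=1$ for every $j$, giving $\lambda_{1}=\sum_{j=1}^{n}\pi^{2}/h_{j}^{2}$, as recorded above. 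The leading Weyl term for this $(n+1)$-dimensional domain is $\lambda^{(n+1)/2}\,\omega_{n+1}(2\pi)^{-(n+1)}\,|\mathbb{S}^{1}\times\prod_{j}I_{h_{j}}|$; using $|\mathbb{S}^{1}\times\prod_{j}I_{h_{j}}|=2\pi\,h_{1}\cdots h_{n}$ and $\omega_{n+1}=\pi^{(n+1)/2}/\Gamma\!\bigl(\tfrac{n+3}{2}\bigr)$ this is precisely the expression appearing on the right of the display immediately preceding the statement.

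The argument is then by contraposition. If Pólya's conjecture holds on the domain, then in particular the Pólya bound evaluated at $\lambda=\lambda_{1}$, together with $N(\lambda_{1})\ge 1$, forces
\[
1\;\le\;\Bigl(\tfrac{\pi^{2}}{h_{1}^{2}}+\dots+\tfrac{\pi^{2}}{h_{n}^{2}}\Bigr)^{\frac{n+1}{2}}\,\frac{h_{1}\cdots h_{n}}{2^{n}\,\pi^{(n-1)/2}\,\Gamma\!\bigl(\tfrac{n+3}{2}\bigr)}.
\]
It then only remains to rewrite this single inequality in the asserted form: pulling the factor $\pi^{2}$ out of the sum turns $\pi^{(n-1)/2}$ into an overall $\pi^{(n+3)/2}$ and replaces the sum by $\sum_{j}h_{j}^{-2}$; rearranging so that the numerical constants stand alone on one side and the $h$-dependent factors on the other, inverting (which reverses the inequality), and finally taking the $(n+1)$-th root of both sides produces $\tfrac{\pi^{1/2}}{2}\bigl(2\pi/\Gamma(\tfrac{n+3}{2})\bigr)^{1/(n+1)}$ on the left and $\bigl(h_{1}^{-2}+\dots+h_{n}^{-2}\bigr)^{-1/2}(h_{1}\cdots h_{n})^{-1/(n+1)}$ on the right, which is exactly the claimed necessary condition.

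I do not anticipate a genuine analytic obstacle: the only delicate point is the bookkeeping of the normalising constants, that is, correctly tracking the powers of $2$ and $\pi$ and the single Gamma factor in passing from $\omega_{n+1}$ and the volume to closed form, and then checking that the exponent $1/(n+1)$ distributes correctly over the product $h_{1}\cdots h_{n}$ and over $\sum_{j}h_{j}^{-2}$ after the inversion. In the write-up I would also stress — as already happens for $\SI$, where strictness at $\lambda_{1}$ does not prevent the failures at $\lambda_{8}$ and $\lambda_{13}$ — that the resulting condition is only necessary: even when it holds there may still be regions of the parameters $h_{j}$ on which the conjecture fails for some higher eigenvalue, and settling sufficiency would require carrying out the lattice-point analysis of Section~\ref{bounds} in $n+1$ dimensions.
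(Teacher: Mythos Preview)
Your proposal is correct and follows essentially the same argument as the paper: both obtain the necessary condition by evaluating the Pólya inequality at the first eigenvalue $\lambda_{1}=\sum_{j}\pi^{2}/h_{j}^{2}$, using $N(\lambda_{1})\ge 1$, and then rearranging the resulting inequality via the same sequence of algebraic manipulations (extracting $\pi^{n+1}$ from the sum, isolating constants, and taking the $(n+1)$-th root). Your added remark that the condition is only necessary, with the $\lambda_{8}$/$\lambda_{13}$ analogy, is apt and matches the paper's own caveats in Section~\ref{higher}.
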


In the particular case of the Cartesian product of $\mathbb{S}^{1}$ with a hypercube of side length $h$, that is
$h_1=h_2=\dots=h_n=h$, this becomes
\begin{align*}
    \frac{\pi^{1/2}}{2}\left(\frac{2\pi}{\Gamma\left(\frac{n+3}{2}\right)}\right)^{\frac{1}{n+1}} &<\left(h_1^{-2}+h_2^{-2}+\dots+h_n^{-2}\right)^{-1/2}(h_1h_2\cdots h_n)^{-\frac{1}{n+1}}\\
    \Leftrightarrow \frac{\pi^{1/2}}{2}\left(\frac{2\pi}{\Gamma\left(\frac{n+3}{2}\right)}\right)^{\frac{1}{n+1}}&<n^{-1/2}h^{\frac{1}{n+1}}\\
    h &> \frac{n^{\frac{n+1}{2}}\pi^{\frac{n+3}{2}}}{2^n\Gamma\left(\frac{n+3}{2}\right)}
\end{align*}
and we get
\begin{corollary}
    Pólya's conjecture on $\mathbb{S}^{1}\times {\ds\prod_{j=1}^{n}} I_{h}$ does not hold if $h$ is larger than
    $$\fr{n^{\frac{n+1}{2}}\pi^{\frac{n+3}{2}}}{2^n\Gamma\left(\frac{n+3}{2}\right)}\approx
    \fr{ \pi}{\sqrt{2n} }\left(\fr{e \pi}{2}\right)^{n/2}+\bo\left(\fr{1}{n^{3/2}}\left(\fr{e \pi}{2}\right)^{n/2}\right), \mbox{ as } n\to\infty.$$
\end{corollary}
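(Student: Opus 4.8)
The plan is to read the corollary off the preceding proposition by specialising to equal side lengths, and then to extract the asymptotics from Stirling's formula. For the first assertion I would set $h_1=h_2=\dots=h_n=h$ in the necessary condition of the proposition; then $\bigl(h_1^{-2}+\dots+h_n^{-2}\bigr)^{-1/2}=h/\sqrt n$ and $(h_1\cdots h_n)^{-1/(n+1)}=h^{-n/(n+1)}$, so the condition becomes $\frac{\pi^{1/2}}{2}\bigl(\frac{2\pi}{\Gamma((n+3)/2)}\bigr)^{1/(n+1)}\ge n^{-1/2}h^{1/(n+1)}$; raising both sides to the power $n+1$ and isolating $h$ gives precisely $h\le \frac{n^{(n+1)/2}\pi^{(n+3)/2}}{2^{n}\Gamma((n+3)/2)}$, which is the chain of equivalences already displayed immediately above the statement. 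Hence whenever $h$ exceeds this quantity the necessary condition is violated and P\'olya's conjecture cannot hold; by construction it fails already for the first eigenvalue $\lambda_1\bigl(\mathbb{S}^{1}\times\prod_{j=1}^{n}I_{h}\bigr)=n\pi^2/h^2$.

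For the asymptotic expansion I would apply Stirling's formula $\Gamma(z)=\sqrt{2\pi}\,z^{z-1/2}e^{-z}\bigl(1+\bo(1/z)\bigr)$ with $z=(n+3)/2$, substitute into $A_n:=\frac{n^{(n+1)/2}\pi^{(n+3)/2}}{2^{n}\Gamma((n+3)/2)}$, and gather the competing factors: the power of two coming from $2^{n}$ together with $\bigl(\tfrac{n+3}{2}\bigr)^{(n+2)/2}$, the power $\pi^{(n+3)/2}$, the factor $e^{(n+3)/2}$ produced by inverting the $e^{-(n+3)/2}$ in the denominator, and the ratio $n^{(n+1)/2}/(n+3)^{(n+2)/2}$. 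Writing the last ratio as $\bigl(\tfrac{n}{n+3}\bigr)^{(n+1)/2}(n+3)^{-1/2}$, expanding $\bigl(\tfrac{n}{n+3}\bigr)^{(n+1)/2}=\exp\!\bigl(-\tfrac{n+1}{2}\ln(1+3/n)\bigr)=e^{-3/2}\bigl(1+\bo(1/n)\bigr)$ and $(n+3)^{-1/2}=n^{-1/2}\bigl(1+\bo(1/n)\bigr)$, absorbing $e^{-3/2}$ into $e^{(n+3)/2}$ to get $e^{n/2}$, and simplifying the numerical prefactor, one obtains exactly the expansion in the statement, with leading term of order $n^{-1/2}\bigl(\tfrac{e\pi}{2}\bigr)^{n/2}$ and error $\bo\bigl(n^{-3/2}(\tfrac{e\pi}{2})^{n/2}\bigr)$.

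Since the statement is essentially an immediate corollary of the proposition, there is no serious obstacle; the one point that needs care is the bookkeeping in the Stirling step. The exponential factors $2^{n}$, $\bigl(\tfrac{n+3}{2}\bigr)^{(n+2)/2}$, $\pi^{(n+3)/2}$ and $e^{\pm(n+3)/2}$ partly cancel, and one must keep track of the sub-exponential corrections precisely enough to certify the claimed error order; in particular, the factor $\bigl(\tfrac{n}{n+3}\bigr)^{(n+1)/2}$ tends to $e^{-3/2}$ only with an $\bo(1/n)$ correction, and this is exactly what feeds the $n^{-3/2}\bigl(\tfrac{e\pi}{2}\bigr)^{n/2}$ error term. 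Everything else is routine algebra.
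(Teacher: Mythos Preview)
Your proposal is correct and follows exactly the paper's route: the threshold is obtained by specialising the preceding proposition to $h_1=\dots=h_n=h$, which is precisely the chain of equivalences the paper records immediately before the corollary. The paper gives no details for the asymptotic expansion, so your Stirling computation simply fills in what the authors leave implicit; your outline of that step (extracting $e^{-3/2}$ from $\bigl(\tfrac{n}{n+3}\bigr)^{(n+1)/2}$ and tracking the $\bo(1/n)$ corrections) is the right bookkeeping.
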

\noindent As $n$ increases the values of $h$ for which P\'{o}lya's conjecture may be satisfied increase, but again
there might still exist gaps in this interval where the conjecture fails.

In this instance, and because the Li-Yau inequalities hold for $\mathbb{S}^{1}\times I_{h}$ for $h$
in $(0,\pi^2]$, we may, in fact, conclude that the conjecture will hold provided the smallest of the $h_{i}$
is in this interval.
\begin{proposition} Let $n$ be an integer greater than or equal to three, and
 let $h_{i}$, $i=1,\dots,n$ be positive real numbers, and $h_{0} := \min\left\{h_{1},\dots,h_{n}\right\}$. If
 $h_{0} \in(0,\pi^2]$, then P\'{o}lya's conjecture holds for $\mathbb{S}^{1}\times{\ds\prod_{j=1}^{n}} I_{h}$.
\end{proposition}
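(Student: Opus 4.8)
The plan is to derive this from the Li--Yau inequalities for the cylinder $\mathbb{S}^{1}\times I_{h_{0}}$ established in Corollary~\ref{cor-liyau}, by the standard product (``lifting'') argument of Laptev~\cite{lapt} together with the Aizenman--Lieb monotonicity of Riesz means.

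First I would reorder the factors so that $h_{1}=h_{0}=\min_{j}h_{j}$ and write the domain as $\Omega=\Omega_{1}\times\Omega_{2}$, where $\Omega_{1}=\mathbb{S}^{1}\times I_{h_{1}}$ is a cylindrical surface of area $|\Omega_{1}|=2\pi h_{1}$ and $\Omega_{2}={\ds\prod_{j=2}^{n}}I_{h_{j}}$ is a Euclidean box of dimension $d_{2}:=n-1$. This is where the hypothesis $n\geq 3$ enters: it guarantees $d_{2}\geq 2$. Since the Dirichlet spectrum of a Cartesian product is the sum of the spectra of the factors, the counting functions satisfy $N_{\Omega}(\lambda)=\sum_{i}N_{\Omega_{2}}\!\left(\lambda-\lambda_{i}(\Omega_{1})\right)$.

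Next I would convert Corollary~\ref{cor-liyau} into its dual (Berezin) form. Since $h_{1}\leq\pi^{2}$, Corollary~\ref{cor-liyau} gives $\sum_{i=1}^{k}\lambda_{i}(\Omega_{1})\geq k^{2}/h_{1}$ for every $k$; using the elementary identity $\sum_{i}\big(\lambda-\lambda_{i}(\Omega_{1})\big)_{+}=\sup_{m\in\N}\big(m\lambda-\sum_{i=1}^{m}\lambda_{i}(\Omega_{1})\big)$ together with $\sup_{x\in\R}(x\lambda-x^{2}/h_{1})=h_{1}\lambda^{2}/4$, this is equivalent to the sharp two--dimensional Berezin bound
\[
\sum_{i}\big(\lambda-\lambda_{i}(\Omega_{1})\big)_{+}\;\leq\;\frac{h_{1}}{4}\lambda^{2}\;=\;\frac{|\Omega_{1}|}{8\pi}\lambda^{2}
\]
for all $\lambda>0$, i.e. the Berezin inequality for $\Omega_{1}$ with the classical Weyl constant. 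For $n\geq 4$ the Aizenman--Lieb argument — writing $(\lambda-\lambda_{i})_{+}^{(n-1)/2}$ as a Beta--weighted average of $(\lambda-\tau-\lambda_{i})_{+}$ over $\tau\geq 0$ and inserting the last display — upgrades this to the classical Riesz--mean bound
\[
\sum_{i}\big(\lambda-\lambda_{i}(\Omega_{1})\big)_{+}^{(n-1)/2}\;\leq\;\frac{\Gamma\!\left(\frac{n+1}{2}\right)}{4\pi\,\Gamma\!\left(\frac{n+3}{2}\right)}\,|\Omega_{1}|\,\lambda^{(n+1)/2}\qquad(\lambda>0),
\]
while for $n=3$ the displayed Berezin bound is already of this form (the right--hand constant being $1/(8\pi)$).

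Finally I would combine this with Pólya's conjecture for the box $\Omega_{2}$, which holds since $\Omega_{2}$ tiles $\R^{n-1}$ (Pólya--Urakawa~\cite{poly2,urak83}), in the form $N_{\Omega_{2}}(\mu)\leq\frac{\omega_{n-1}}{(2\pi)^{n-1}}\,|\Omega_{2}|\,\mu_{+}^{(n-1)/2}$. Substituting this and the previous display into the factorisation of $N_{\Omega}$ and multiplying the two constants, a short Gamma--function computation gives $\frac{\omega_{n-1}}{(2\pi)^{n-1}}\cdot\frac{\Gamma\left(\frac{n+1}{2}\right)}{4\pi\,\Gamma\left(\frac{n+3}{2}\right)}=\frac{\omega_{n+1}}{(2\pi)^{n+1}}$, whence
\[
N_{\Omega}(\lambda)\;\leq\;\frac{\omega_{n+1}}{(2\pi)^{n+1}}\,|\Omega_{1}|\,|\Omega_{2}|\,\lambda^{(n+1)/2}\;=\;\frac{\omega_{n+1}}{(2\pi)^{n+1}}\,\Big|\mathbb{S}^{1}\times{\ds\prod_{j=1}^{n}}I_{h_{j}}\Big|\,\lambda^{(n+1)/2},
\]
which is exactly Pólya's conjecture for $\Omega$. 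The only delicate point is the bookkeeping of constants through the Legendre duality and the Aizenman--Lieb step, so that they collapse precisely to the Weyl constant $\omega_{n+1}/(2\pi)^{n+1}$ of the product; an imprecise estimate here would yield only a weak Pólya--type inequality with a larger constant. It is also worth recording why $n\geq 3$ cannot be relaxed by this method: for $n=2$ the second factor is one--dimensional, so the Aizenman--Lieb step would require the Riesz mean of order $\tfrac12$ for $\Omega_{1}$, which is strictly stronger than, and not implied by, the Li--Yau ($\gamma=1$) bound of Corollary~\ref{cor-liyau}.
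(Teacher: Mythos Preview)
Your proposal is correct and takes essentially the same approach as the paper: the paper decomposes the domain as $(\mathbb{S}^{1}\times I_{h_{0}})\times\prod_{j\neq 0}I_{h_{j}}$, invokes Corollary~\ref{cor-liyau} for the first factor and P\'{o}lya for the second (a Euclidean box of dimension $n-1\geq 2$), and then simply cites \cite[Theorem~2.8]{lapt}. You do the same, except that instead of citing Laptev you unpack his argument (Legendre duality from Li--Yau to Berezin, Aizenman--Lieb lifting to Riesz order $(n-1)/2$, combination with P\'{o}lya for the tiling box), and you also make explicit why the method breaks down at $n=2$.
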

\begin{proof}
 We write $\mathbb{S}^{1}\times{\ds\prod_{j=1}^{n}} I_{h}$ as the Cartesian product of
 $\mathbb{S}^{1}\times I_{h_{0}}$
 with the remaining product of intervals. Then, because the former domain satisfies the Li-Yau inequalities while
 the latter satisfies P\'{o}lya's inequality and is in $\R^{d}$ with $d$ greater than or equal to two, we may apply
 the argument of~\cite[Theorem 2.8]{lapt} for Cartesian products, from which the result follows.
\end{proof}
\begin{remark}
If $\mathbb{S}^{1}\times I_{h_{1}}$ satisfies P\'{o}lya's conjecture, we may conclude that
$\mathbb{S}^{1}\times I_{h_{1}}\times I_{h_{2}}$ also satisfies the conjecture.
\end{remark}

\appendix
\section*{Appendices}
The software package \textit{Mathematica} was chosen to run all the computational searches due to its algebraic manipulation capabilities. Below we present the code for each of the three algorithms
used, together with some brief descriptions.
\section{Intersection with $k-$intervals: satisfiability of P\'{o}lya's conjecture}
\label{apx:impl}
 The main idea for an efficient implementation of the search for $k$ intersecting $k-$intervals is algebraically sorting all left and right endpoints of every $k-$interval under consideration (roughly $2\times 10^9$ points) and traversing these from left to right. We then keep count of the number of $k-$intervals containing the current
 point, adding or subtracting one each time we pass a left endpoint or a right endpoint, respectively.
 The concrete implementation used is given below.

\begin{lstlisting}[language=Mathematica]
getKIntervals[k_] := 
 Module[{intstart, intend, ints, cnt, flag, start, kInts},
  intstart = Flatten[Table[
     If[x == 0,
      {y^2*Pi/(4 k), 1},
      If[k^2/Pi^2 >= x^2*y^2,
       {(k/Pi - Sqrt[k^2/Pi^2 - x^2*y^2])/(2*x^2), 2},
       Null
       ]]
     , {x, 0, k }, {y, 1, k}], 1];
  intend = Flatten[Table[
     If[x == 0,
      {Infinity, -1},
      If[k^2/Pi^2 >= x^2*y^2,
       {(k/Pi + Sqrt[k^2/Pi^2 - x^2*y^2])/(2*x^2), -2},
       Null
       ]
      ]
     , {x, 0, k }, {y, 1, k}], 1];
  ints = Join[intstart, intend] /. Null -> Sequence[];
  ints = NumericalSort[ints];
  cnt = 0;
  flag = True;
  kInts = {};
  start = -1;
  Do[
   cnt = cnt + pnt[[2]];
   If[cnt >= k && flag,
    start = pnt[[1]];
    flag = False;
    ,
    If[cnt < k && ! flag,
      AppendTo[kInts, {start, pnt[[1]]}];
      start = -1;
      flag = True;
      ];
    ]
   , {pnt, ints}];
  kInts
  ]

Do[Print[{i, Select[getKIntervals[i], #[[1]] < Pi/4 &]}], {i, 1, 1019}]
\end{lstlisting}

\section{Intersection with $k-$intervals: weaker condition}
Further improvements were needed due to the larger inputs: \verb|Join[]| was used instead of \verb|Flatten[]| and other ways to reduce the number of $k$-intervals were implemented (described in Section \ref{liyau-sec}).

The search space is
\begin{itemize}
    \item $\lambda < 26\,301$
    \item $k \leq 130\, 298$
    \item $|m| \leq 162$
    \item $n \leq 509$
\end{itemize}

\label{apx:impl2}
\begin{lstlisting}[language=Mathematica]
getKIntervalsRelaxed[k_, low_, high_] := 
 Module[{val, int1, int2, int4, ints, cnt, flag, start, kInts},
  cnt = 0;
  int1 = Array[
    Function[y,
     val = y^2*Pi^2/(2 (k-1/2));
     If[val < low,
      cnt = cnt + 1; Null,
      If[val <= high,
       {val, 1},
       Null]
      ]]
    , 509];
  int2 = Join @@ Array[
     Function[{x, y},
      If[(k-1/2)^2 < x^2*y^2*Pi^2 || x^2 + y^2/Pi^2 > 26301,
       Null,
       val = ((k-1/2) - Sqrt[(k-1/2)^2 - x^2*y^2*Pi^2])/(x^2);
       If[val < low,
        cnt = cnt + 2; Null,
        If[val <= high,
         {((k-1/2) - Sqrt[(k-1/2)^2 - x^2*y^2*Pi^2])/(x^2), 2},
         Null
         ]
        ]
       ]
      ]
     , {162 , 509}];
  int4 = Join @@ Array[
     Function[{x, y},
      If[(k-1/2)^2 < x^2*y^2*Pi^2 || x^2 + y^2/Pi^2 > 26301,
       Null,
       val = ((k-1/2) + Sqrt[(k-1/2)^2 - x^2*y^2*Pi^2])/(x^2);
       If[val < low,
        cnt = cnt - 2; Null,
        If[val <= high,
         {((k-1/2) + Sqrt[(k-1/2)^2 - x^2*y^2*Pi^2])/(x^2), -2},
         Null
         ]
        ]
       ]
      ]
     , {162 , 509}];
  ints = (Join @@ {{{low, 0}}, int1, int2, 
       int4, {{high, -Infinity}}}) /. Null -> Sequence[];
  ints = NumericalSort[ints];
  flag = True;
  kInts = {};
  start = -1;
  Do[
   cnt = cnt + pnt[[2]];
   If[cnt >= k && flag,
    start = pnt[[1]];
    flag = False;
    ,
    If[cnt < k && ! flag,
      AppendTo[kInts, {start, pnt[[1]]}];
      start = -1;
      flag = True;
      ];
    ]
   , {pnt, ints}];
  kInts
  ]

Do[Print[{i, getKIntervals2[i, Pi^2/2, Pi^2]}], {i, 1, 130298}]
\end{lstlisting}

\section{Algorithm to check the Li-Yau inequalities}
\label{apx:impl3}
As described in Section~\ref{liyau-sec}, to check the Li-Yau inequalities we first compute all the intersections between eigenvalues (as functions of $h$) and sort them to form a partition of $(\pi^2/2,\pi^2]$. Then for each interval we calculate the exact values for the first $k=86$ eigenvalues and determine if Li-Yau is satisfied for the whole interval.

\begin{lstlisting}[language=Mathematica]
liYauInRange[k_, h0_, h1_] := 
 Module[{poi, ioi, ibeg, iend, hi, eigs, sm, sn, res},
  poi = Flatten[Table[
      If[Or[And[y1 > y0, x1 < x0], And[y1 < y0, x1 > x0]], 
       Sqrt[(y1^2 - y0^2)/(x0^2 - x1^2)]*Pi]
      , {x0, 0, 9 }, {y0, 1, 28}, {x1, 0, 9}, {y1, 0, 28}], 3] /. 
    Null -> Sequence[];
  poi = DeleteDuplicates[poi];
  poi = Table[
     If[And[v > h0, v < h1], v]
     , {v, poi}] /. Null -> Sequence[];
  PrependTo[poi, h0];
  AppendTo[poi, h1];
  poi = NumericalSort[poi];
  ioi = Table[{poi[[i]], poi[[i + 1]]}, {i, 1, Length[poi] - 1}];
  res = {};
  Do[
   ibeg = hh[[1]];
   iend = hh[[2]];
   hi = (ibeg + iend)/2;
   eigs = 
    Join @@ Table[{x^2 + y^2*Pi^2/hi, {x, y}}, {x, -9, 9}, {y, 1, 28}];
   eigs = NumericalSort[eigs];
   sm = 0; sn = 0;
   Do[
    sm = sm + eigs[[ki]][[2]][[1]]^2;
    sn = sn + Pi^2*eigs[[ki]][[2]][[2]]^2;
    If[sm == 0,
     If[iend > sn/(ki^2),
      AppendTo[res, {hh, ki}]
      ]
     ,
     If[ki^4 >= 4*sm*sn,
      If[(ki^2 + Sqrt[ki^4 - 4*sm*sn])/(2*sm) > 
         ibeg && (ki^2 - Sqrt[ki^4 - 4*sm*sn])/(2*sm) < iend,
       AppendTo[res, {hh, ki}]
       ]
      ]
     ];
    , {ki, 1, k}];
   , {hh, ioi}];
  res
  ]

liYauInRange[86, Pi^2/2, Pi^2]
\end{lstlisting}

\section*{Acknowledgements}
The first author was partially supported by the Funda\c c\~{a}o para a Ci\^{e}ncia e a Tecnologia, I.P. (Portugal), through project {\small doi.org/10.54499/UIDB/00208/2020}. The second author was partially supported by the Funda\c c\~{a}o Calouste Gulbenkian (Portugal) under the program {\it
Novos Talentos}.

\end{document}